\newtheorem{theorem}{Theorem}[section]
\newtheorem{proposition}[theorem]{Proposition}
\newtheorem{corollary}[theorem]{Corollary}
\newtheorem{lemma}[theorem]{Lemma}
\theoremstyle{definition}
\newtheorem{definition}[theorem]{Definition}
\theoremstyle{definition}
\newcommand{\MPT}{{\bf{MPT}}}
\newcommand{\Rone}{{\bf R}_1}
\theoremstyle{definition}
\DeclareSymbolFont{AMSb}{U}{msb}{m}{n}
\DeclareMathSymbol{\N}{\mathbin}{AMSb}{"4E}
\DeclareMathSymbol{\Z}{\mathbin}{AMSb}{"5A}
\DeclareMathSymbol{\R}{\mathbin}{AMSb}{"52}
\DeclareMathSymbol{\Q}{\mathbin}{AMSb}{"51}
\DeclareMathSymbol{\I}{\mathbin}{AMSb}{"49}
\DeclareMathSymbol{\C}{\mathbin}{AMSb}{"43}
\begin{document}
\title{A Model for Rank One Measure Preserving Transformations}
\author{Su Gao}
\address{Department of Mathematics\\ University of North Texas\\ 1155 Union Circle \#311430\\  Denton, TX 76203\\ USA}
\email{sgao@unt.edu}
\author{Aaron Hill}
\address{Department of Mathematics\\ University of North Texas\\ 1155 Union Circle \#311430\\  Denton, TX 76203\\ USA}
\email{Aaron.Hill@unt.edu}
\date{\today}
\subjclass[2010]{Primary 54H20, 37A05, 37B10; Secondary 54H05, 37C15}
\keywords{rank one transformation, rank one word, rank one system, dynamical property, generic property, model for measure preserving transformations, topological isomorphism}
\thanks{The first author acknowledges the US NSF grants DMS-0901853 and DMS-1201290 for the support of his research. The second author acknowledges the US NSF grant DMS-0943870 for the support of his research.}
\maketitle \thispagestyle{empty}

\begin{abstract}
We define a model for rank one measure preserving transformations in the sense of \cite{Foreman}. This is done by defining a new Polish topology on the space of codes, which are infinite rank one words, for symbolic rank one systems. We establish that this space of codes has the same generic dynamical properties as the space of (rank one) measure preserving transformations on the unit interval.
\end{abstract}


\section{Introduction}
There are several definitions of rank one measure preserving transformations in the literature (cf. a summary in \cite{Ferenczi}). They are generally considered equivalent, and none are easy to give. Among them, the two most useful have been the constructive geometric definition and the constructive symbolic definition. According to the constructive geometric definition, a rank one transformation is a measure preserving transformation of the unit interval that is obtained by a cutting and stacking process. The constructive symbolic definition, however, defines a rank one system as a special kind of Bernoulli subshift.

In ergodic theory it is important to speak of a generic dynamical property of measure preserving transformations. In order to do this one needs to fix a topology on the space of all measure preserving transformations. Let $\MPT$ denote the collection of all invertible measure preserving transformations on the unit interval with the Lebegue measure (by identifying transformations that agree on a set of measure one). Then the weak topology on $\MPT$ is Polish and is considered the standard topology.

Let $\Rone$ be the subcollection of $\MPT$ consisting of all rank one transformations, i.e., transformations that are isomorphic to constructive rank one transformations. Then $\Rone$ is a dense $G_\delta$ subset of $\MPT$. Thus  $\Rone$ is not only a Polish space in its own right with the subspace topology inherited from $\MPT$, but also a generic class in $\MPT$. In particular, a dynamical property of rank one transformations is generic iff it is generic as a property for all measure preserving transformations.

The situation on the side of the constructive symbolic definition of rank one systems, however, is not clear. Recently the authors defined in \cite{GaoHill1} a natural space $\mathcal{R}$ of codes for all symbolic rank one homeomorphisms. We also showed that it has a natural Polish topology. Thus it makes sense to consider the subspace $\mathcal{R}^*$ of all codes for symbolic systems that correspond to rank one measure preserving systems. Unfortunately $\mathcal{R}^*$ is neither generic in $\mathcal{R}$ nor Polish with the subspace topology. Thus in order to speak of generic dynamical properties on $\mathcal{R}^*$ we need to redefine a Polish topology on $\mathcal{R}^*$.

As $\mathcal{R}^*$ is a Borel subset of $\mathcal{R}$ such Polish topology on $\mathcal{R}^*$ definitely exists. However, we would like the topology on $\mathcal{R}^*$ to have better properties. First, it is desirable that the topology on $\mathcal{R}^*$ is naturally connected to its meaning as the space of all rank one measure preserving systems. Second, it would be nice if with an appropriate definition of the topology on $\mathcal{R}^*$ the two spaces $\mathcal{R}^*$ and $\Rone$ have the same generic dynamical properties. Moreover, we want an explicit correspondence between $\mathcal{R}^*$ and $\Rone$ that would map a code in $\mathcal{R}^*$ to a generic transformation in $\Rone$ isomorphic to the coded system.

We achieve all these in this paper. We will define a natural Polish topology on $\mathcal{R}^*$ and prove that $\mathcal{R}^*$ and $\Rone$ share the same generic dynamical properties. This is done by making use of the concept of a {\it model} defined by Foreman, Rudolph and Weiss in \cite{Foreman}. The main theorem of the current paper is to show that $\mathcal{R}^*$ is a model of (rank one) measure preserving transformations in the sense of \cite{Foreman}. That $\mathcal{R}^*$ and $\Rone$ share the same generic dynamical properties is a corollary of the main theorem.

Since many interesting results about rank one transformations were proved by combinatorial methods applied to the symbolic context, the results established in this paper are potentially useful for further studies of generic behavior of rank one transformations.

\section{The standard model}

In this section we recall the basic definitions and establish the standard model for rank one measure preserving transformations.

A {\em measure preserving system} is a quadruple $(X, \mathcal{B}, \mu, T)$ where $X$ is a set, $\mathcal{B}$ is a $\sigma$-algebra of subsets of $X$, $\mu$ a separable non-atomic probability measure on $\mathcal{B}$, and $T$ an invertible $\mu$-preserving transformation, i.e., $T: X\to X$ is an invertible map such that for all $A\in \mathcal{B}$, $T^{-1}(A)\in\mathcal{B}$ and $\mu(T^{-1}(A))=\mu(A)$.

In the above definition the triple $(X,\mathcal{B}, \mu)$ is called a {\em Lebegue space}. It is well known that any Lebesgue space is isomorphic, modulo a null set, to the unit interval $[0,1]$ with the $\sigma$-algebra of all Borel sets and the standard Lebesgue measure $\lambda$. For notational simplicity we denote this canonical Lebesgue space by just $[0,1]$. It follows that any measure preserving system is isomorphic to one on $[0,1]$ with some Lebesgue measure preserving $T:[0,1]\to[0,1]$. It is thus natural to consider the collection
$$ \MPT=\{T:[0,1]\to [0,1]\,\mid\, \mbox{$T$ is Lebesgue measure preserving}\}, $$
with identification of $T$ and $S$ if $$\lambda(\{x\in[0,1]\,\mid\, T(x)=S(x)\})=1,$$
 a {\em model} of all measure preserving systems. (Here the word ``model" is used in the intuitive sense; later on we will use it in a rigorous sense following a definition of Foreman, Rudolph and Weiss \cite{Foreman}.) The standard topology on $\MPT$ is the {\em weak topology}, i.e., the topology generated by the convergence
$$ T_n\to T \mbox{ iff } \lambda(T_n(A)\triangle T(A))\to 0 \mbox{ for all Borel $A\subseteq [0,1]$.} $$
It is well known that this topology is Polish.

If $(X, \mathcal{B}, \mu, T)$ and $(Y, \mathcal{C},\nu, S)$ are two measure preserving systems, a ({\em measure-theoretic}) {\em isomorphism} between them is a bijection $\phi: X'\to Y'$, where $X'\subseteq X$ and $Y'\subseteq Y$ with $\mu(X')=\nu(Y')=1$, such that $\phi$ is bimeasurable,
$\mu(\phi^{-1}(A))=\nu(A)$ for all $A\in \mathcal{C}$, and $S\circ \phi=\phi\circ T$. When there is such an isomorphism, we write $(X,\mathcal{B},\mu, T)\cong (Y,\mathcal{C},\nu, S)$, or simply $(X, T)\cong (Y,S)$ or $T\cong S$ if the Lebesgue spaces are defined without ambiguity.

Following \cite{GlasnerKing} and \cite{Foreman} we define a {\em dynamical property} of measure preserving transformations to be an isomorphism-invariant class of measure preserving transformations. Formally, a dynamical property is a class $\Gamma$ of measure preserving transformations such that if $T\in \Gamma$ and $S\cong T$, then $S\in \Gamma$. Dynamical properties can be modeled by $\cong$-invariant subsets of $\MPT$. A dynamical property $\Gamma$ is {\em generic} if $\Gamma\cap\MPT$ is a comeager (or residual) subset of $\MPT$. This makes sense because we equipped $\MPT$ with a (standard) topology as above.

In this paper we study the space of all rank one measure preserving transformations. Among some equivalent definitions of rank one we will use the {\em constructive geometric definition} as the standard one. This is given below.

\begin{definition}\label{defgeo} A measure preserving transformation $T$ on $[0,1]$ is {\em rank one} if there exist sequences of positive integers $q_n\geq 2$, $n \in\mathbb{N}$, and nonnegative integers $a_{n,i}$, $n \in \mathbb{N}$, $1 \leq i\leq q_n-1$, such that, if $h_n$ is defined by
$$h_0 = 1, h_{n+1} = q_nh_n + \sum^{q_n-1}_{i=1} a_{n,i},$$
then
$$ \sum^{+\infty}_{n=0}\displaystyle\frac{h_{n+1}-q_n h_n}{h_{n+1}}<+\infty,$$
and subsets of $[0,1]$, denoted by $F_n$, $n\in\mathbb{N}$, by $F_{n,i}$, $n\in\mathbb{N}$, $1\leq i\leq q_n$, and by $C_{n,i,j}$, $n\in\mathbb{N}$, $1\leq i\leq q_n-1$, $1\leq j \leq a_{n,i}$, (if $a_{n,i} = 0$ then there are no $C_{n,i,j}$), such that for all $n$
\begin{itemize}
\item $\{F_{n,i}\mid 1\leq i\leq q_n\}$ is a partition of $F_n$,
\item the $T^k(F_n)$, $0\leq k\leq h_n-1$, are disjoint, 
\item $T(F_{n, i}) = C_{n,i,1}$ if $a_{n,i}\neq 0$ and $i < q_n$,
\item $T(F_{n,i}) = F_{n,i+1}$ if $a_{n,i} = 0$ and $i < q_n$,
\item $T(C_{n,i,j}) = C_{n,i,j+1}$ if $j < a_{n,i}$,
\item $T(C_{n,i,a_{n,i}}) = F_{n,i+1}$ if $i < q_n$,
\item $F_{n+1} = F_{n,1}$,
\end{itemize}
and $\lambda(\bigcup^{h_n-1}_{k=0} T^k(F_n))\to 1$ as $n\to \infty$.
\end{definition}

What the definition is trying to describe is a Rokhlin tower obtained by a {\em cutting and stacking} process. In this definition the sequence $(q_n)$ is called the {\em cutting parameter}, the sets $C_{n,i,j}$ are called the {\em spacers}, and the doubly-indexed sequence $(a_{n,i})$ is called the {\em spacer parameter}.

When the spacer parameter is the constant zero sequence, i.e., when the spacers are nonexistent, the transformation is called an {\em odometer map}.

A general measure preserving transformation (on a general Lebesgue space) is {\em rank one} if it is isomorphic to a constructive rank one transformation on $[0,1]$ as defined above. With this definition, it is trivial that being rank one is a dynamical property. Let $\Rone$ be the collection of all rank one transformations in $\MPT$. Then it is well known (but somewhat difficult to find in the literature) that $\Rone$ is a dense $G_\delta$ subspace of $\MPT$. The density  follows, in particular, from the following corollary of Rokhlin's lemma: if $T$ is an aperiodic measure preserving transformation then the collection of all measure preserving transformations isomorphic to $T$ is dense in $\MPT$. The fact that $\Rone$ is $G_\delta$ can be seen from an equivalent definition of rank one which Ferenczi called the reduced geometric definition (\cite{Ferenczi}).

Thus being rank one is itself a generic dynamical property of all measure preserving transformations.

The main technical theorem of this paper concerns the following concept defined in \cite{Foreman}.

\begin{definition}\label{defmodel} A {\em model} for measure preserving transformations is a pair $(X,\pi)$, where $X$ is a Polish space and $\pi: X\to \MPT$ is a continuous function such that for a comeager set $C\subseteq\MPT$ and for all $T\in C$, $\{x\in X\,\mid\, \pi(x)\cong T\}$ is dense in $X$.
\end{definition}

In practice, the space $X$ in a model is a space of measure preserving systems or ``codes" for measure preserving systems. When this is the case, we require the map $\pi$ to be ``isomorphism-preserving," i.e. $\pi(x)\cong x$ for all $x\in X$. The point of this concept rests upon an immediate corollary (Lemma 12 of \cite{Foreman}) that if $X$ is a model for measure preserving transformations, then $X$ and $\MPT$ have the same generic dynamical properties. This result was stated without proof in \cite{Foreman}. For the convenience of the reader we give a proof before the end of this section.

With this rigorous definition $\Rone$ becomes a model with the identity map into $\MPT$. It follows that $\Rone$ and $\MPT$ have the same generic dynamical properties. Henceforth we will call $\Rone$ the {\em standard model} for rank one measure preserving transformations.

\begin{lemma}[\cite{Foreman} Lemma 12]\label{lemma12} If $(X, \pi)$ is a model for measure preserving transformations, then $X$ and $\MPT$ have the same generic dynamical properties.
\end{lemma}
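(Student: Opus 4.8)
The plan is to show that a dynamical property $\Gamma$ is generic in $X$ (meaning $\{x \in X : x \in \Gamma\}$, or more precisely $\{x : \pi(x) \in \Gamma\}$, is comeager in $X$) if and only if it is generic in $\MPT$, using the continuity of $\pi$ in one direction and the density-of-fibers hypothesis in the other. Throughout I identify the property with the $\cong$-invariant class it determines, and I use the fact that since $\Gamma$ is $\cong$-invariant, $\pi^{-1}(\Gamma) = \{x \in X : \pi(x) \in \Gamma\}$, which is the subset of $X$ we care about.

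First, suppose $\Gamma$ is generic in $\MPT$, so $\Gamma \cap \MPT$ is comeager; write $\Gamma \cap \MPT \supseteq G$ for a dense $G_\delta$ set $G$. Since $\pi$ is continuous, $\pi^{-1}(G)$ is a $G_\delta$ subset of $X$, and $\pi^{-1}(G) \subseteq \pi^{-1}(\Gamma)$. So it suffices to show $\pi^{-1}(G)$ is dense in $X$; equivalently, that $\pi^{-1}(U)$ is nonempty — in fact we want comeagerness, so let me instead argue directly that $\pi^{-1}(\Gamma)$ is comeager. Here is the cleaner route: it is enough to show that for every dense open $U \subseteq \MPT$, the set $\pi^{-1}(U)$ is dense in $X$ (it is automatically open by continuity); then intersecting over a countable basis of dense opens whose intersection lies in $\Gamma$ gives a dense $G_\delta$ inside $\pi^{-1}(\Gamma)$. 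To see $\pi^{-1}(U)$ is dense: fix a nonempty open $V \subseteq X$. By the model hypothesis there is a comeager $C \subseteq \MPT$ such that for all $T \in C$ the fiber $\{x : \pi(x) \cong T\}$ is dense in $X$. Pick any $T \in C \cap U$ (nonempty since both are comeager, hence dense); then $\{x : \pi(x) \cong T\}$ meets $V$, say at $x_0$. Since $U$ is $\cong$-invariant? — it need not be. So this needs one more turn: replace $U$ by a $\cong$-invariant comeager set. Since $\Gamma$ is a $\cong$-invariant comeager class, and comeager $\cong$-invariant sets are cofinal, I work instead with $\Gamma$ itself: it is comeager and $\cong$-invariant. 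Pick $T \in C \cap \Gamma$ (nonempty), and $x_0 \in V$ with $\pi(x_0) \cong T$; then $\pi(x_0) \in \Gamma$ by invariance, so $x_0 \in \pi^{-1}(\Gamma) \cap V$. Thus $\pi^{-1}(\Gamma)$ is dense. To upgrade "dense" to "comeager": $\Gamma$ comeager means $\Gamma \supseteq \bigcap_n U_n$ with each $U_n$ dense open in $\MPT$; then $\bigcap_n (\Gamma \cap U_n)$ is still comeager and $\cong$-invariant — wait, $\Gamma \cap U_n$ need not be invariant. Cleanest fix: the $\cong$-saturation of a comeager set is comeager, and a comeager set contains a comeager $\cong$-invariant $G_\delta$ (take the intersection over a dense orbit-equivalence argument, or simply note $\Gamma$ itself, being a comeager dynamical property, equals a comeager $\cong$-invariant set; we may shrink it to a $\cong$-invariant dense $G_\delta$ $\Gamma' \subseteq \Gamma$). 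Apply the density argument to each dense open $U_n$ with $\Gamma' \subseteq \bigcap U_n$: for each $n$, $\pi^{-1}(U_n)$ is dense open in $X$ — density because we can run the fiber argument picking $T \in C \cap \Gamma'$, whence $\pi(x_0) \in \Gamma' \subseteq U_n$, so $x_0 \in \pi^{-1}(U_n)$. Hence $\pi^{-1}(\Gamma') = \bigcap_n \pi^{-1}(U_n)$ is comeager in $X$, and so is the larger set $\pi^{-1}(\Gamma)$.

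Conversely, suppose $\Gamma$ is generic in $X$, i.e.\ $\pi^{-1}(\Gamma)$ is comeager in $X$. I want $\Gamma \cap \MPT$ comeager in $\MPT$. Here the argument goes through the fact that $\pi$ pushes a comeager set to something "large enough": but a continuous image of a comeager set need not be comeager, so I instead use the model hypothesis together with a Baire-category argument on fibers. Concretely, I claim $\{x \in X : \pi(x) \notin \Gamma\}$ being meager forces $\Gamma \cap \MPT$ comeager as follows. Suppose not; then $\MPT \setminus \Gamma$ is non-meager, hence (being $\cong$-invariant and Baire-measurable, assuming $\Gamma$ is Baire-measurable — which holds for the $\cong$-invariant sets arising as dynamical properties in this setting, or one restricts attention to such) comeager in some basic open set, and by the topological $0$–$1$ law for the $\cong$-action (or just the Rokhlin-density of every aperiodic isomorphism class) actually comeager in $\MPT$. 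Then applying the already-proved forward direction to the complement $\MPT \setminus \Gamma$ (a generic dynamical property) yields $\pi^{-1}(\MPT \setminus \Gamma) = X \setminus \pi^{-1}(\Gamma)$ comeager in $X$, contradicting comeagerness of $\pi^{-1}(\Gamma)$. This finishes the equivalence.

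The main obstacle is the reverse direction, and within it the step asserting that a non-meager $\cong$-invariant dynamical property is in fact comeager in $\MPT$ — i.e.\ a topological zero–one law. This is where the special structure of $\MPT$ (the conjugacy action of the full group, Rokhlin's lemma, density of every aperiodic conjugacy class) enters; without it one could have a property non-meager but not comeager, breaking the biconditional. In the write-up I would either cite the standard zero–one law for the conjugacy action on $\MPT$ (as in Glasner–King) or observe that genericity of a dynamical property is unambiguous precisely because of this law, and then the reverse direction reduces cleanly to the forward direction applied to the complement. The forward direction itself is routine once one is careful to work with a $\cong$-invariant comeager $G_\delta$ refinement $\Gamma'$ of $\Gamma$ so that the fiber-density hypothesis can be invoked at points whose $\pi$-image lies in each prescribed dense open set.
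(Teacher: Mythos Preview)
Your overall architecture matches the paper's: for the forward direction you reduce to finding a $\cong$-invariant dense $G_\delta$ set $\Gamma'\subseteq\Gamma$ and then use the density-of-fibers hypothesis to show $\pi^{-1}(\Gamma')$ is dense $G_\delta$ in $X$; for the reverse direction you invoke a topological $0$--$1$ law for the conjugacy action on $\MPT$ and apply the forward direction to the complement. This is exactly the paper's strategy.

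The one substantive gap is that you assert the existence of the invariant dense $G_\delta$ refinement $\Gamma'$ without justification (``we may shrink it to a $\cong$-invariant dense $G_\delta$''; ``take the intersection over a dense orbit-equivalence argument''). This is precisely the step the paper works out: starting from any dense $G_\delta$ set $P'\subseteq\Gamma$, one has $\forall g\,\forall^* x\,(g\cdot x\in P')$, hence by Kuratowski--Ulam $\forall^* x\,\forall^* g\,(g\cdot x\in P')$, and the set $C=\{x:\forall^* g\,(g\cdot x\in P')\}$ is the desired invariant $G_\delta$ subset of $\Gamma$. Without this computation your forward argument is incomplete, since the density-of-fibers step genuinely needs $\Gamma'$ to be simultaneously $G_\delta$ (so that $\pi^{-1}(\Gamma')$ is $G_\delta$) and $\cong$-invariant (so that $\pi(x_0)\cong T\in\Gamma'$ forces $\pi(x_0)\in\Gamma'$). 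The same Kuratowski--Ulam maneuver, combined with the Rokhlin-density of conjugacy classes, is also what underlies the $0$--$1$ law you cite from Glasner--King in the reverse direction; the paper carries this out inline rather than citing it. So your proposal is correct in outline and identical in spirit, but the Kuratowski--Ulam production of the invariant $G_\delta$ is the actual content, and you should supply it rather than gesture at it.
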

\begin{proof} First assume $P\subseteq \MPT$ is a generic dynamical property, i.e., $P$ is an invariant comeager subset of $\MPT$. Let $P'\subseteq P$ be a dense $G_\delta$. Note that $\MPT$ is a Polish group acting on itself by conjugacy, and that the action is continuous. Thus we have that
$$ \forall g\in \MPT\ \forall^* x\in \MPT\ (g\cdot x\in P'). $$
Here $\forall^* x$ means ``for a comeager set of $x$." This implies
$$ \forall^* g\in \MPT\ \forall^* x\in\MPT\ (g\cdot x\in P'). $$
By Kuratowski--Ulam, we can switch the category quantifiers and get
$$ \forall^* x\in\MPT\ \forall^*g\in\MPT\  (g\cdot x\in P'). $$
This means that the set $C=\{x\in \MPT\,|\, \forall^*g\in\MPT\ (g\cdot x\in P')\}$ is comeager. However, $C$ is an invariant $G_\delta$ subset of $P$. Now $\pi^{-1}(C)$ is an invariant dense $G_\delta$ set in $X$ since $X$ is a model. Thus $\pi^{-1}(P)$ is an invariant comeager set in $X$, hence a generic dynamical property.

Conversely, let $A\subseteq X$ be an invariant comeager subset. We show that the saturation of $\pi(A)$ is an invariant comeager set in $\MPT$. If the saturation of $\pi(A)$ is not comeager, then its complement $C$ is nonmeager. Let $U\subseteq \MPT$ be an open set in which $C$ is comeager. By a similar argument as the application of Kuratowski--Ulam above, we obtain a comeager set of $x$ in $U$ so that $\forall^* g\in\MPT\ (g\cdot x\in P')$ for some $G_\delta$ set $P'$. But the set of $x$ so that $\forall^* g\in\MPT\ (g\cdot x\in P')$ is an invariant dense $G_\delta$ (the density follows from the consequence of Rokhlin's lemma that every orbit is dense). This says that $C$ is comeager. Thus by the above argument again $\pi^{-1}(C)$ is invariant comeager. Thus $\pi^{-1}(C)\cap A\neq\emptyset$, a contradiction.
\end{proof}

\section{The constructive symbolic definition}
In this section we consider the constructive symbolic definition of a rank one measure preserving system.

\begin{definition} Let $\mathcal{F}$ be the set of all finite words over the alphabet $\{0,1\}$ that start and end with $0$.
\begin{enumerate}
\item[(a)] A {\em generating sequence} is an infinite sequence $(v_n)$ of finite words in $\mathcal{F}$ defined by induction on $n\in\mathbb{N}$:
$$ v_0=0,\ \ v_{n+1}=v_n1^{a_{n,1}}v_n1^{a_{n,2}}\dots v_n1^{a_{n,q_n-1}}v_n $$
for some positive integer $q_n\geq 2$ and nonnegative integers $a_{n,i}$ for $1\leq i\leq q_n-1$. We continue to refer to the sequence $(q_n)$ as the {\em cutting parameter} and the doubly-indexed sequence $(a_{n,i})$ as the {\em spacer parameter}.
\item[(b)] An infinite word $V\in\{0,1\}^\mathbb{N}$ is a {\em rank one word} if there is a generating sequence $(v_n)$ such that $V\upharpoonright \mbox{lh}(v_n)=v_n$ for all $n\in\mathbb{N}$.
\item[(c)] A {\em rank one (topological dynamical) system} is
$$\begin{array}{r} X=\{ x\in\{0,1\}^\mathbb{Z}\mid \mbox{every finite subword of $x$ is a subword of $v_n$}\\
\mbox{\ \ \ \ \ \ \ \ \ \ \ \ \ \ \ \ \ \ \ \ \ \ \ \ \ \ \ \ \ \ \ \ \ \ \ \ \ \ \ \ \ \ \ \ \ \ \ \ \ \ \ \ \ \ \ \ \ \ \ \ \ \ \ \ \ \ \ \ \ \ \ for some $n\in\mathbb{N}$}\},\end{array} $$
where $(v_n)$ is a generating sequence, with the shift map $\sigma: X\to X$ defined by
$$ \sigma(x)(k)=x(k+1) \mbox{ for all $k\in\mathbb{Z}$.} $$
\end{enumerate}
\end{definition}
Every rank one topological dynamical system $X$ is a {\em Bernoulli subshift}, as $X$ is a shift-invariant closed subset of $\{0,1\}^\mathbb{Z}$. If $X$ is a rank one system, $\alpha$ is a finite word and $k\in\mathbb{Z}$, then
$$ U_{\alpha,k}=\{ x\in X\,\mid\, \mbox{ $x$ has an occurrence of $\alpha$ (starting) at position $k$}\} $$
is a basic open set of $X$. We consider only {\em nondegenerate} rank one systems; these are the rank one systems where the infinite rank one word $V$ given by the generating sequence is aperiodic. For any nondegenerate rank one system $X$ there is an atomless shift-invariant (possibly infinite) measure $\mu_0$ on $X$ defined by
$$ \mu_0(U_{\alpha,k})=\lim_{n\to\infty}\displaystyle\frac{\mbox{the number of occurrences of $\alpha$ in $v_n$}}{\mbox{the number of occurrences of $0$ in $v_n$}}. $$
$\mu_0$ is the unique shift-invariant measure with $\mu_0(U_{0,0})=1$. If $\mu_0$ is finite (equivalently, $\mu_0(U_{1,0})$ is finite), then its normalization $\mu$ is given by
$$ \mu(U_{\alpha,k})=\lim_{n\to\infty}\displaystyle\frac{\mbox{the number of occurrences of $\alpha$ in $v_n$}}{\mbox{lh}(v_n)}. $$
In this case, $\mu$ is the unique shift-invariant, atomless, probability Borel measure on $X$ (and is therefore ergodic). We summarize this in the following definition of symbolic rank one measure preserving systems.

\begin{definition}\label{rankonesymbolic} A {\em symbolic rank one (measure preserving) system} is a quadruple $(X,\mathcal{B},\mu,\sigma)$ such that $(X,\sigma)$ is a nondegenerate rank one topological dynamical system given by a generating sequence $(v_n)$, and $\mu$ is the unique shift-invariant, atomless, probability Borel measure on $X$, provided that
$$ \lim_{n\to\infty}\displaystyle\frac{\mbox{the number of occurrences of $1$ in $v_n$}}{\mbox{the number of occurrences of $0$ in $v_n$}}<+\infty. $$
\end{definition}

This definition has several advantages over the other definitions of rank one. First, with the symbolic definition the cutting and stacking process is represented by inserting 1s in between different copies of finite words, and is somewhat easier to visualize. It is clear that the combinatorics of the generating sequences completely determines the topological and measure structure of the rank one systems, so the study of rank one systems are very often reduced to a combinatorial analysis of rank one words. Additionally, one can study the rank one topological dynamical systems in their own right. This was exactly what the authors did in \cite{GaoHill1}. We will see in the rest of this section that some results obtained for the simpler topological setting will become very relevant even when we consider problems in the measure-theoretic context.

Our main objective of this paper is to establish a model for rank one measure preserving systems by exploring the symbolic definition of rank one. Instead of considering the rank one systems themselves, the elements of the model will be ``codes" for the rank one systems, and in this context the codes will be infinite rank one words.

Following \cite{GaoHill1} we let $\mathcal{R}$ be the set of all nondegenerate infinite rank one words. If $V\in \mathcal{R}$, we denote
$$ X_V=\{x\in \{0,1\}^\mathbb{Z}\,\mid\, \mbox{every finite subword of $x$ is a subword of $V$}\}. $$
If $(v_n)$ is a generating sequence for $V$, then $X_V$ coincides with the rank one system given by $(v_n)$, since every finite subword of $V$ is also a subword of some $v_n$. This means that $X_V$ is independent of the choice of the generating sequence for $V$, and is the unique rank one system associated with $V$. Moreover, it has been proved in \cite{GaoHill1} (Proposition 2.36) that if $V, W\in\mathcal{R}$ are distinct, then $X_V\neq X_W$. Thus there is a one-one correspondence between symbolic rank one systems and nondegenerate infinite rank one words, and it makes sense to think of nondegenerate infinite rank one words as codes for symbolic rank one systems.

The following notation will be used in the rest of the paper. If $\alpha$ is a finite word, we let
$$ Y(\alpha)=\mbox{ the number of occurrences of 1 in $\alpha$}, $$
$$ Z(\alpha)=\mbox{ the number of occurrences of 0 in $\alpha$}, $$
and $$ \rho_\alpha=\displaystyle\frac{Y(\alpha)}{Z(\alpha)}. $$
Clearly $Y(\alpha)+Z(\alpha)=\mbox{lh}(\alpha)$.

Now if $V\in \mathcal{R}$ and $(v_n)$ is a generating sequence for $V$, we claim that
$\lim_{n\to\infty}\rho_{v_n}$
either exists or is $+\infty$, and is independent of the choice of $(v_n)$. To see this we will need to recall some more concepts and results from \cite{GaoHill1}, including the following key notion of {\em canonical generating sequence} for rank one systems.

\begin{definition} \label{defcgs} Let $\mathcal{F}$ be the set of all finite words over the alphabet $\{0,1\}$ that start and end with $0$.
\begin{enumerate}
\item[(a)] If $u, v\in \mathcal{F}$, we say that $u$ is {\em built from} $v$ if there is a positive integer $q\geq 2$ and nonnegative integers $a_1,\dots, a_{q-1}$ such that
$$ u=v1^{a_1}v\dots v1^{a_{q-1}}v. $$
Moreover, we say that $u$ is {\em built simply from} $v$ if $a_1=\dots=a_{q-1}$.
\item[(b)] If $V$ is an infinite rank one word and $v\in\mathcal{F}$, we say that $V$ is {\em built from} $v$ if
there are nonnegative integers $a_1,\dots, a_n, \dots$ such that
$$ V=v1^{a_1}v\dots v1^{a_n}v\dots. $$
\item[(c)] If $V$ is an infinite rank one word and $v\in\mathcal{F}$, then $v$ is an element of the {\em canonical generating sequence} of $V$ if $V$ is built from $v$ and there are no $u, w\in\mathcal{F}$ such that
\begin{itemize}
\item[(i)] $\mbox{lh}(u)<\mbox{lh}(v)<\mbox{lh}(w)$,
\item[(ii)] $V$ is built both from $u$ and from $w$,
\item[(iii)] $w$ is built from $u$ and $u$ is built from $v$, and
\item[(iv)] $w$ is built simply from $u$.
\end{itemize}
\end{enumerate}
\end{definition}

The following basic fact is easy to see.

\begin{lemma}\label{rhoincrease}If $\alpha,\beta\in\mathcal{F}$ and $\alpha$ is built from $\beta$, with the cutting parameter $q$ and spacer parameter summing up to $a$, i.e.,
$$ \alpha=\beta1^{a_1}\beta\dots\beta1^{a_{q-1}}\beta $$
where $a=\sum^{q-1}_{k=1}a_k$, then
$$ \rho_\alpha=\rho_\beta+\displaystyle\frac{a}{qZ(\beta)}.
$$
\end{lemma}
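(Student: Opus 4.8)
The plan is to prove this by a direct count of the symbols $0$ and $1$ appearing in $\alpha$, using the explicit form $\alpha=\beta 1^{a_1}\beta\cdots\beta 1^{a_{q-1}}\beta$. The word $\alpha$ is the concatenation of $q$ copies of $\beta$ interleaved with the $q-1$ spacer blocks $1^{a_1},\dots,1^{a_{q-1}}$. The two facts I would record first are: (i) each spacer block consists entirely of $1$'s and so contributes no occurrence of $0$, whence $Z(\alpha)=qZ(\beta)$; and (ii) each of the $q$ copies of $\beta$ contributes $Y(\beta)$ occurrences of $1$, while the spacer blocks contribute $a_1+\cdots+a_{q-1}=a$ additional occurrences, whence $Y(\alpha)=qY(\beta)+a$.

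Given these, the conclusion is immediate:
$$ \rho_\alpha=\frac{Y(\alpha)}{Z(\alpha)}=\frac{qY(\beta)+a}{qZ(\beta)}=\frac{Y(\beta)}{Z(\beta)}+\frac{a}{qZ(\beta)}=\rho_\beta+\frac{a}{qZ(\beta)}. $$
There is no substantive obstacle in this argument; the only point requiring a word of care is that $\rho_\beta$ and $\rho_\alpha$ are well-defined, i.e.\ that $Z(\beta)\neq 0$. This holds because every word in $\mathcal{F}$ begins (and ends) with $0$ and therefore contains at least one $0$; it follows at once that $Z(\alpha)=qZ(\beta)\neq 0$ as well, so both ratios make sense and the displayed computation is valid.
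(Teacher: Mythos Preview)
Your proof is correct and is precisely the straightforward computation the paper has in mind; the paper's own proof consists of the single line ``By a straightforward computation.'' Your added remark that $Z(\beta)\neq 0$ because words in $\mathcal{F}$ begin with $0$ is the only detail worth noting, and you handle it correctly.
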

\begin{proof} By a straightforward computation.\end{proof}

From this it follows that
for any generating sequence $(v_n)$ for $V\in\mathcal{R}$, the sequence
$\rho_{v_n}$ is nondecreasing, and hence $\lim_{n\to\infty}\rho_{v_n}$ either exists or is $+\infty$.

As an aside we also note the following basic fact which we will use later our proofs.

\begin{lemma}[\cite{GaoHill1} Lemma 2.7 (b)]\label{simplybuilt} Suppose $\alpha,\beta, \gamma\in\mathcal{F}$, $\alpha$ is built from $\beta$, and $\beta$ is built from $\gamma$. If $\alpha$ is built simply from $\gamma$, then $\beta$ is built simply from $\gamma$ and $\alpha$ is built simply from $\beta$.
\end{lemma}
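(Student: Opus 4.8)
The plan is to reduce everything to a statement about the spacer parameters of the two "layers" of the compound building relation. Since $\alpha$ is built from $\beta$ and $\beta$ is built from $\gamma$, write
$$
\beta = \gamma 1^{b_1}\gamma 1^{b_2}\cdots\gamma 1^{b_{p-1}}\gamma, \qquad
\alpha = \beta 1^{a_1}\beta 1^{a_2}\cdots\beta 1^{a_{q-1}}\beta,
$$
for cutting parameters $p,q\geq 2$ and nonnegative integers $b_i$, $a_j$. Substituting the first expression into the second and reading off the resulting word over the alphabet $\{\gamma, 1\}$, one sees that $\alpha$ is built from $\gamma$ with a cutting parameter $r = pq$ and a spacer parameter that is a completely explicit concatenation: between consecutive copies of $\gamma$ the exponent is $b_i$ when the two copies come from the same copy of $\beta$ (this pattern $b_1,\dots,b_{p-1}$ repeats), and is $a_j$ when the two copies straddle the boundary between the $j$-th and $(j{+}1)$-st copies of $\beta$ inside $\alpha$. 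Thus the spacer word of $\alpha$ over $\gamma$ has the shape
$$
(b_1,\dots,b_{p-1}),\, a_1,\, (b_1,\dots,b_{p-1}),\, a_2,\, \dots,\, a_{q-1},\, (b_1,\dots,b_{p-1}).
$$

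First I would establish the easy direction: if all these $r-1$ exponents are equal to a common value $c$, then in particular all the $b_i$ equal $c$ (they appear among them, since $p\geq 2$ forces at least one $b_i$ to occur — indeed $p-1\geq 1$), so $\beta$ is built simply from $\gamma$; and all the $a_j$ equal $c$ as well (since $q\geq 2$ forces at least one $a_j$ to occur), so $\alpha$ is built simply from $\beta$. Hence "$\alpha$ built simply from $\gamma$" immediately yields both conclusions, using only that $p,q\geq 2$ guarantee both families of exponents are nonempty and are visibly sub-families of the single family of exponents witnessing that $\alpha$ is built simply from $\gamma$.

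The one subtlety — and the only place any care is needed — is making sure the combinatorial decomposition above is actually the correct one, i.e.\ that when we expand $\alpha$ as a word in copies of $\gamma$ separated by blocks of $1$s, the exponents really are exactly the interleaving displayed. This is where one must be slightly careful about the boundary copies of $\gamma$: each copy of $\beta$ both begins and ends with a copy of $\gamma$ (since $\gamma\in\mathcal{F}$ starts and ends with $0$, and $\beta = \gamma 1^{b_1}\cdots\gamma$), and a block $1^{a_j}$ sits between the final $\gamma$ of the $j$-th $\beta$ and the initial $\gamma$ of the $(j{+}1)$-st $\beta$; there is no overlap or merging of $1$-blocks because each $\beta$ truly starts and ends with $0$. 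I expect this bookkeeping to be the main (and essentially the only) obstacle, and it is routine. Once the decomposition is pinned down, the lemma follows immediately as above; alternatively, since this is quoted as Lemma 2.7(b) of \cite{GaoHill1}, one could simply cite it, but the self-contained argument just sketched is short enough to include.
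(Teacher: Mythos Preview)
Your argument is correct. The paper itself does not include a proof of this lemma; it merely cites it as Lemma~2.7(b) of \cite{GaoHill1}. Your direct combinatorial argument---expanding the two-layer building relation, reading off the spacer sequence for $\alpha$ over $\gamma$ as the interleaving $(b_1,\dots,b_{p-1},a_1,b_1,\dots,b_{p-1},a_2,\dots,a_{q-1},b_1,\dots,b_{p-1})$, and then noting that uniqueness of this decomposition (guaranteed because $\gamma$ begins with $0$, so the spacer exponents are determined) forces all $b_i$ and all $a_j$ to equal the common value---is the natural self-contained proof and is presumably what appears in the cited source. The bookkeeping point you flag (that each copy of $\beta$ genuinely begins and ends with $0$, so adjacent $1$-blocks do not merge) is exactly the right detail to check.
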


Regarding the notion of canonical generating sequence, we remark that, although it is not clear from the definition, the canonical generating sequence is indeed a generating sequence. We will need the following results established in \cite{GaoHill1}.

\begin{theorem}[\cite{GaoHill1} Proposition 2.15]\label{cgs} Every nondegenerate infinite rank one word has a unique infinite canonical generating sequence.
\end{theorem}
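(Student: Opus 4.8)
The plan is to establish existence and uniqueness separately, with uniqueness being the more delicate part. For existence, given a nondegenerate infinite rank one word $V$, I would first collect the set $\mathcal{G}_V$ of all $v\in\mathcal{F}$ from which $V$ is built. Any generating sequence for $V$ is an increasing chain in $\mathcal{G}_V$, and conversely one shows that $\mathcal{G}_V$ is linearly ordered by the ``built from'' relation on length and is cofinal in length (using Lemma~\ref{rhoincrease}-style bookkeeping and, crucially, the structural results from \cite{GaoHill1} about how two words both generating $V$ must be related). The candidate canonical generating sequence is then the subchain consisting of exactly those $v\in\mathcal{G}_V$ satisfying the ``no intermediate simply-built refinement'' condition (iv) of Definition~\ref{defcgs}. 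The first thing to check is that this subchain is \emph{infinite}: starting from $v_0=0$, one walks up $\mathcal{G}_V$ and argues that between any $v$ in the canonical set and the ``next'' one there cannot be an infinite descending obstruction, since lengths are strictly increasing positive integers; the potential failure mode — that from some point on every larger generator is obtained by a simply-built step, collapsing the canonical set to a finite set — has to be ruled out using nondegeneracy of $V$ (aperiodicity prevents $V$ from being ``eventually built simply'' all the way up in a way that would terminate the canonical sequence).

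Next I would verify that this infinite subchain is actually a \emph{generating sequence}, i.e. that consecutive canonical words $v_k, v_{k+1}$ satisfy $v_{k+1}=v_k 1^{a_1}v_k\cdots v_k 1^{a_{q-1}}v_k$ with $q\geq 2$ — in other words that $v_{k+1}$ is built directly from $v_k$ rather than merely built from some $w$ properly between them. This is where Lemma~\ref{simplybuilt} does the work: if there were an intermediate $w$ with $\mathrm{lh}(v_k)<\mathrm{lh}(w)<\mathrm{lh}(v_{k+1})$, $V$ built from $w$, $w$ built from $v_k$ and $v_{k+1}$ built from $w$, then — combined with the fact that $v_{k+1}$ and $v_k$ are both canonical, so in particular none of the forbidden configurations of clause (c) occur at $v_k$ — one derives a contradiction, typically by showing such a $w$ would itself have to be canonical (forcing $v_{k+1}$ not to be the successor) or would produce exactly the forbidden simply-built intermediate. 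I expect this transitivity/composition argument, juggling ``built from'' and ``built simply from'' via Lemma~\ref{simplybuilt}, to be the main obstacle: the definition in clause (c) is phrased via a negated existential with four simultaneous conditions, and one has to carefully unwind it to see that the canonical words chain up directly.

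For uniqueness, suppose $(v_n)$ and $(w_n)$ are both infinite canonical generating sequences for $V$. Both are increasing cofinal chains in $\mathcal{G}_V$, which is linearly ordered by length, so the two sets $\{v_n\}$ and $\{w_n\}$ interleave. If they are not equal, pick the shortest word lying in one set but not the other, say $w_m\notin\{v_n\}$; then there are consecutive $v_k, v_{k+1}$ with $\mathrm{lh}(v_k)<\mathrm{lh}(w_m)<\mathrm{lh}(v_{k+1})$, and $w_m$ sits strictly between two consecutive \emph{canonical} words. Using that $(v_n)$ is a generating sequence, $v_{k+1}$ is built from $v_k$; one then argues $w_m$ is built from $v_k$ and $v_{k+1}$ is built from $w_m$, so $(u,w)=(v_k,v_{k+1})$ — or rather $(u,w)=(v_k, \text{something})$ — realizes a forbidden configuration contradicting canonicity of $v_k$, \emph{unless} $w_m$'s insertion is simply-built, in which case the symmetric argument with the roles of the two sequences reversed (now using that $(w_n)$ is canonical and $w_m$ is in it) yields the contradiction. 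The bookkeeping here mirrors the existence argument, again leaning on Lemma~\ref{simplybuilt} to propagate the ``built simply'' property through compositions, so once the existence half is done uniqueness should follow by essentially the same toolkit. Throughout, I would treat as given (citing \cite{GaoHill1}) the basic facts that $\mathcal{G}_V$ is linearly ordered by length and that any two generators of $V$ are comparable — these are the ambient structural results on which both halves rest.
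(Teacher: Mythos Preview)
The paper does not prove this statement: it is quoted verbatim from \cite{GaoHill1} (Proposition~2.15) and merely cited, with no argument supplied here. So there is no in-paper proof to compare your proposal against.

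On the substance of your sketch, the existence half is along reasonable lines: one must show that the set of $v$ satisfying Definition~\ref{defcgs}(c) is infinite (nondegeneracy of $V$ is exactly what rules out the degenerate case in which, past some point, every longer generator is obtained by a simply-built step, which would collapse the canonical set), and that consecutive members of this set are built directly from one another so that the list really is a generating sequence. Your instinct to lean on Lemma~\ref{simplybuilt} and the comparability facts of Lemma~\ref{cgsprop} for this is correct.

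The uniqueness half, however, is misdirected. Definition~\ref{defcgs}(c) does not posit an abstract object ``a canonical generating sequence'' and then ask whether two of them could differ; it \emph{defines} the canonical generating sequence intrinsically, as the set of all $v\in\mathcal{F}$ satisfying a fixed condition (namely: $V$ is built from $v$, and no pair $(u,w)$ as in (i)--(iv) exists). That set is unique by fiat. The word ``unique'' in the theorem is not asserting that two competing sequences must coincide; the genuine content is that this canonically defined set is infinite and, listed by increasing length, forms a generating sequence. Your interleaving argument with two hypothetical canonical sequences $(v_n)$ and $(w_n)$ is therefore solving a non-problem, and the energy there should instead go into tightening the existence argument (in particular, the step showing that between consecutive canonical words there is no intermediate generator, which is where the negated four-clause condition actually has to be unwound).
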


\begin{lemma}[\cite{GaoHill1} Proposition 2.13]\label{cgsprop} If $v\in\mathcal{F}$ is an element of the canonical generating sequence of $V\in\mathcal{R}$ and $V$ is built from $u\in\mathcal{F}$, then
\begin{itemize}
\item if $\mbox{lh}(v)<\mbox{lh}(u)$ then $u$ is built from $v$;
\item if $\mbox{lh}(v)>\mbox{lh}(u)$ then $v$ is built from $u$;
\item if $\mbox{lh}(v)=\mbox{lh}(u)$ then $v=u$.
\end{itemize}
\end{lemma}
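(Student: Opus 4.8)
The plan is to prove the trichotomy by comparing an arbitrary word from which $V$ is built with the canonical generating sequence of $V$. By Theorem~\ref{cgs}, $V$ has a unique infinite canonical generating sequence $(w_n)$; this is a generating sequence, so $V$ is built from every $w_n$ and, since ``built from'' is transitive, $w_n$ is built from $w_k$ whenever $k<n$. As $v$ is an element of the canonical generating sequence, $v=w_m$ for some $m$. Observe also that if $V$ is built from $\alpha\in\mathcal{F}$ then, by Definition~\ref{defcgs}(b), $\alpha$ is exactly the length-$\mathrm{lh}(\alpha)$ prefix of $V$; in particular $\mathrm{lh}(u)=\mathrm{lh}(v)$ forces $u=v$, which settles the third clause.

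For the first two clauses I would first establish the following claim about any $u$ from which $V$ is built: \emph{either $u=w_j$ for some $j$, or there is $j$ with $\mathrm{lh}(w_j)<\mathrm{lh}(u)<\mathrm{lh}(w_{j+1})$ such that $u$ is built from $w_j$ and $w_{j+1}$ is built from $u$.} Granting the claim, the lemma follows by bookkeeping with lengths. Since $\mathrm{lh}(w_0)=1$ (the canonical generating sequence begins with $0$) and $\mathrm{lh}(w_n)\to\infty$, there is a unique $j$ with $\mathrm{lh}(w_j)\leq\mathrm{lh}(u)<\mathrm{lh}(w_{j+1})$. If $\mathrm{lh}(u)=\mathrm{lh}(w_j)$, then $u=w_j$ by the prefix observation, and as $u\neq v$ we have $j\neq m$; comparing $j$ with $m$ and using that the longer of two terms of $(w_n)$ is built from the shorter yields precisely the asserted dichotomy. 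If $\mathrm{lh}(w_j)<\mathrm{lh}(u)<\mathrm{lh}(w_{j+1})$, apply the claim: when $\mathrm{lh}(u)<\mathrm{lh}(v)=\mathrm{lh}(w_m)$ we have $j+1\leq m$, so $w_m=v$ is built from $w_{j+1}$ (or equals it), hence from $u$; when $\mathrm{lh}(u)>\mathrm{lh}(v)=\mathrm{lh}(w_m)$ we have $j\geq m$, so $w_j$ is built from $w_m=v$ (or equals it), hence $u$, being built from $w_j$, is built from $v$.

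It remains to prove the claim, which is the combinatorial heart of the matter. Fix $j$ with $\mathrm{lh}(w_j)\leq\mathrm{lh}(u)<\mathrm{lh}(w_{j+1})$; the case $\mathrm{lh}(u)=\mathrm{lh}(w_j)$ is the prefix observation, so assume $\mathrm{lh}(w_j)<\mathrm{lh}(u)<\mathrm{lh}(w_{j+1})$. Since $u$ is the length-$\mathrm{lh}(u)$ prefix of $V$ and $V$ is built from $w_j$, the word $u$ is an initial segment of the decomposition $V=w_j1^{d_1}w_j1^{d_2}\cdots$; as $u$ ends in $0$ it cannot stop inside a block of $1$'s, so $u$ stops either at the end of a copy of $w_j$ — in which case $u$ is built from $w_j$ — or strictly inside a copy of $w_j$. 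The point is to rule out the second alternative, and this is exactly where the canonicity of $w_j$ is used: a ``ragged'' stopping point, together with the fact that $V$ is also built from $u$ (so copies of $u$, and hence of its prefix $w_j$, recur in $V$ in a second, shifted pattern), should let one interpolate a word $\beta$ with $\mathrm{lh}(w_j)\leq\mathrm{lh}(\beta)<\mathrm{lh}(w_{j+1})$ that, together with $w_j$ and some longer word built from $V$, violates the minimality built into Definition~\ref{defcgs}; Lemma~\ref{simplybuilt} would be used to transport ``built simply'' through the building hierarchy and Lemma~\ref{rhoincrease} to keep track of lengths and $1$-densities along the way. Once $u$ is known to be built from $w_j$, the remaining assertion that $w_{j+1}$ is built from $u$ is obtained by the same kind of analysis applied to the prefix $w_{j+1}$ of $V$ relative to the decomposition $V=u1^{c_1}u1^{c_2}\cdots$: a stopping point strictly inside a copy of $u$ would again produce a forbidden refinement of the canonical step, this time contradicting the canonicity of $w_{j+1}$. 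I expect the main obstacle to be precisely this alignment analysis — understanding how copies of $u$ and of a word in the canonical generating sequence can overlap inside $V$ — and I would likely isolate a dedicated occurrence/alignment lemma, in the spirit of the structural results of \cite{GaoHill1}, before assembling the argument in the form above.
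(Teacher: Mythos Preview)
The paper does not give a proof of this lemma at all: it is quoted verbatim as Proposition~2.13 of \cite{GaoHill1} and used as a black box, so there is nothing in the present paper to compare your argument against.

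As for your sketch on its own terms, the reduction you set up is sensible: the prefix observation settles the equal-length case cleanly, and once you know that any $u$ from which $V$ is built is sandwiched between consecutive canonical words $w_j$ and $w_{j+1}$ in the ``built from'' relation, the trichotomy follows by transitivity. The genuine gap is exactly where you flag it. You do not actually prove the claim; you only say that a ragged stopping point ``should'' produce a word $\beta$ violating the canonicity condition in Definition~\ref{defcgs}(c), and that Lemmas~\ref{simplybuilt} and~\ref{rhoincrease} ``would be used.'' But you never exhibit $\beta$, never specify the $u,w$ of Definition~\ref{defcgs}(c) that witness the violation, and never explain how ``built simply'' enters---indeed Lemma~\ref{rhoincrease} is about $1$-densities and has no obvious role in an alignment argument. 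The actual proof in \cite{GaoHill1} relies on a detailed analysis of how two building decompositions of the same infinite word must line up (a unique-readability argument for words in $\mathcal{F}$), and that analysis is precisely what is missing here. Without it, the heart of the lemma---that two words from which $V$ is built are comparable under ``built from''---remains unproved.
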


\begin{lemma}[\cite{GaoHill1} Proposition 2.17]\label{common}
Let $V, W\in\mathcal{R}$ and $(v_n)$ and $(w_m)$ be the canonical generating sequences for $V$ and $W$, respectively. Suppose for some $n \in\mathbb{N}$, $W$ is built from $v_{n+1}$. Then $v_i=w_i$, for all $i\leq n$.
\end{lemma}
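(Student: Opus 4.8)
The plan is to show that the elements of the canonical generating sequence $(w_m)$ of $W$ of length at most $\mathrm{lh}(v_n)$ are exactly $v_0,v_1,\dots,v_n$. Since by Theorem \ref{cgs} both $(v_m)$ and $(w_m)$ list the canonical generating sequence elements in strictly increasing length, and since $v_0=0=w_0$ while $\mathrm{lh}(v_{n+1})>\mathrm{lh}(v_n)$, that set equality forces $w_i=v_i$ for all $i\le n$. Write ``$v$ is \emph{canonical in} $V$'' for ``$v$ is an element of the canonical generating sequence of $V$'', and call a pair $(u,w)\in\mathcal F^2$ a \emph{redundancy at $v$ in $V$} if $\mathrm{lh}(u)<\mathrm{lh}(v)<\mathrm{lh}(w)$, $V$ is built from both $u$ and $w$, $v$ is built from $u$, $w$ is built from $v$, and $w$ is built simply from $u$; by Definition \ref{defcgs}, $v$ is canonical in $V$ iff $V$ is built from $v$ and there is no redundancy at $v$ in $V$. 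Two routine facts, each proved by substituting one block decomposition into another, will be used throughout: ``built from'' is transitive, so $W$ is built from $v_j$ for every $j\le n+1$; and if $\mathrm{lh}(u)<\mathrm{lh}(v)$, $v$ is built from $u$, and $V$ is built from $v$, then $V$ is built from $u$. I will also use that the decomposition of a word into copies of a fixed shorter word, when it exists, is unique.

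First inclusion: if $t$ is canonical in $W$ and $\mathrm{lh}(t)\le\mathrm{lh}(v_n)$, then $t=v_i$ for some $i\le n$. Since $W$ is built from $v_n$ and $t$ is canonical in $W$, Lemma \ref{cgsprop} gives $t=v_n$ (done) or $v_n$ is built from $t$, in which case $V$ is built from $t$ and it suffices to show $t$ is canonical in $V$. If not, fix a redundancy $(u',w')$ at $t$ in $V$; then $W$ is built from $u'$ (as $t$ is built from $u'$ and $W$ from $t$). If $\mathrm{lh}(w')\le\mathrm{lh}(v_n)$, Lemma \ref{cgsprop} applied to $v_n$ canonical in $V$ and $V$ built from $w'$ gives $w'=v_n$ or $v_n$ built from $w'$, whence $W$ is built from $w'$ and $(u',w')$ is already a redundancy at $t$ in $W$. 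If instead $\mathrm{lh}(w')>\mathrm{lh}(v_n)$, Lemma \ref{cgsprop} gives $w'$ built from $v_n$; since also $v_n$ is built from $u'$ (transitivity through $t$) and $w'$ is built simply from $u'$, Lemma \ref{simplybuilt} shows $v_n$ is built simply from $u'$, so $(u',v_n)$ is a redundancy at $t$ in $W$. Either way $t$ is not canonical in $W$, a contradiction; hence $t$ is canonical in $V$, i.e. $t=v_i$ with $\mathrm{lh}(v_i)\le\mathrm{lh}(v_n)$, so $i\le n$.

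Second inclusion: each $v_i$ with $i\le n$ is canonical in $W$. Here $W$ is built from $v_i$; suppose $(u,w)$ is a redundancy at $v_i$ in $W$. Applying Lemma \ref{simplybuilt} along $u\to v_i\to w$ gives that $v_i$ is built simply from $u$ and $w$ is built simply from $v_i$, and comparing the unique decompositions into copies of $u$ forces a common spacer $a$, so $v_i=u(1^{a}u)^{k-1}$ and $w=u(1^{a}u)^{p-1}$ with $k<p$ and $k\mid p$; note $V$ is built from $u$ since $v_i$ is. If $a_{i,1}=\dots=a_{i,q_i-1}=a$, then $v_{i+1}=u(1^{a}u)^{kq_i-1}$ and $(u,v_{i+1})$ is a redundancy at $v_i$ in $V$, a contradiction. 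Otherwise, the key observation is that $V$ and $W$ are both built from the fixed word $v_{i+1}$, whose (unique) decomposition into copies of $v_i$ has block pattern $a_{i,1},\dots,a_{i,q_i-1}$; hence the $v_i$-block spacer sequences of $V$ and of $W$ agree at every index $\not\equiv0\pmod{q_i}$, while their (possibly different) higher-level values sit at indices that are multiples of $q_i$. A short gcd computation then shows that, with $G:=\gcd\big(q_i,\{r:1\le r\le q_i-1,\ a_{i,r}\ne a\}\big)$ (nonempty in this case), the gcd of the set of indices where the $v_i$-block spacer sequence differs from $a$ equals $G$ for both $V$ and $W$. Consequently, for a word $u(1^{a}u)^{p-1}$ the statements ``$V$ is built from it'' and ``$W$ is built from it'' amount to the same condition $\big(p/\gcd(p,k)\big)\mid G$. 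Since $W$ is built from $w$, so is $V$, and then $(u,w)$ is a redundancy at $v_i$ in $V$, contradicting $v_i$ canonical in $V$.

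Combining the two inclusions, $\{v_0,\dots,v_n\}$ is exactly the set of canonical generating sequence elements of $W$ of length $\le\mathrm{lh}(v_n)$, which, listed in increasing length, yields $w_i=v_i$ for all $i\le n$. I expect the second inclusion to be the main obstacle: in the first inclusion a redundancy in $V$ transports to $W$ almost automatically, but a ``built from'' witness $w$ arising in $W$ need not witness anything in $V$ — the family of words that $V$ is built from is \emph{not} linearly ordered by ``built from'' — so the redundancy cannot simply be transported. The resolution is the observation that $V$ and $W$ share their $v_i$-block structure off the $q_i$-lattice because they share the factor $v_{i+1}$, which makes the divisibility invariant $G$ common to both. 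A secondary technical point is the bookkeeping of spacers in the applications of Lemma \ref{simplybuilt} together with the uniqueness of block decompositions.
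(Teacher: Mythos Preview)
The paper does not supply a proof of this lemma; it is quoted from \cite{GaoHill1} (Proposition~2.17), so there is no in-paper argument to compare against. Your proof is correct and self-contained. The first inclusion is the easy direction: a redundancy $(u',w')$ at $t$ in $V$ transports to one in $W$, replacing $w'$ by $v_n$ when $\mathrm{lh}(w')>\mathrm{lh}(v_n)$ via Lemma~\ref{simplybuilt}. The second inclusion carries the real content, and your gcd invariant handles it cleanly: since both $V$ and $W$ are built from $v_{i+1}$, their $v_i$-block spacer sequences agree at all indices $\not\equiv 0\pmod{q_i}$, so the gcd of the index set where the spacer differs from $a$ equals $G=\gcd\bigl(q_i,\{r:a_{i,r}\ne a\}\bigr)$ for both words (any additional indices lying in $q_i\mathbb{Z}$ are already multiples of $G$ and cannot lower the gcd). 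Hence the divisibility condition $p\mid kG$ characterizing ``built from $u(1^{a}u)^{p-1}$'' is identical for $V$ and $W$, and the redundancy $(u,w)$ at $v_i$ in $W$ becomes a redundancy in $V$, contradicting canonicity of $v_i$. Two minor remarks: condition~(iii) in Definition~\ref{defcgs}(c) as printed contains a typo (``$u$ is built from $v$'' should have $v$ built from $u$ and $w$ built from $v$), and your reading is the intended one; and the closing paragraph of self-assessment does not belong in a finished proof.
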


Note that if $v\in \mathcal{F}$ is an element of any generating sequence for $V\in\mathcal{R}$, then $V$ is built from $v$. From this and Lemmas~\ref{rhoincrease} and \ref{cgsprop} it follows that if $(v_n)$ is the canonical generating sequence for $V\in \mathcal{R}$ and $(u_m)$ is any generating sequence for $V$, then
$\lim_{n\to\infty} \rho_{v_n}=\lim_{m\to\infty} \rho_{u_m}$.
In particular, this limit is independent of the choice of the generating sequence for $V$. For future reference we denote this limit as $\rho_V$. Definition~\ref{rankonesymbolic} states that rank one systems correspond to precisely those $V\in\mathcal{R}$ with $\rho_V<+\infty$.

We let
$$ \mathcal{R}^*=\left\{\,V\in\mathcal{R}\,\mid\, \rho_V<+\infty\,\right\}. $$
If $V\in\mathcal{R}^*$, then there is a unique shift-invariant, atomless, Borel probability on $X_V$, which we denote by $\mu_V$. Now $\mathcal{R}^*$ can be viewed as the space of codes for rank one measure preserving systems.

There is, however, a minor problem we have not mentioned so far: the constructive symbolic definition and the constructive geometric definition of rank one are not quite equivalent. In fact, in the case of an odometer map the spacer parameter is a constant $0$ sequence and the corresponding infinite rank one word is degenerate. Thus odometer maps are not obviously coded by elements of $\mathcal{R}^*$. Nevertheless, we will prove elsewhere that every odometer map is isomorphic to a nondegenerate rank one system. Thererfore every element of $\Rone$ does correspond to a nondegenerate rank one word, i.e., an element of $\mathcal{R}^*$.

We are now ready to discuss the topologies on $\mathcal{R}$ and $\mathcal{R}^*$.

\section{A topology on $\mathcal{R}^*$}
The notion of the canonical generating sequence enables us to define a rather natural topology on $\mathcal{R}$ given by the following metric.

\begin{definition} If $V, W\in\mathcal{R}$ are distinct infinite rank one words and $(v_n)$ and $(w_m)$ are canonical generating sequences for $V$ and $W$, respectively, then let
$$ d(V,W)=2^{-\sup\{\mbox{\scriptsize lh}(v)\,\mid\, v\in\{v_n\,\mid\, n\in\mathbb{N}\}\cap\{w_m\,\mid\, m\in\mathbb{N}\}\}}. $$
\end{definition}

It was shown in \cite{GaoHill1} (Proposition 2.20) that $d$ is a separable, complete ultrametric on $\mathcal{R}$. In particular, the metric topology on $\mathcal{R}$ given by $d$, which we denote by $\tau_d$, is Polish.


We are now ready to define a new topology on $\mathcal{R}^*$. This is achieved by the following two definitions.

\begin{definition}
\begin{enumerate}
\item[\rm (a)] For all $V\in \mathcal{R}$ and $n\in\mathbb{N}$, define
$\rho_{V,n}=\rho_{v_n},$
where $(v_n)$ is the canonical generating sequence of $V$.
\item[\rm (b)] For all $N\in\mathbb{N}$ and positive $r\in\mathbb{Q}$, define $\mathcal{O}(N,r)$ as the set of all $V\in \mathcal{R}$ such that for all $m,n\geq N$, $|\rho_{V, m}-\rho_{V,n}|\leq r$.
\end{enumerate}
\end{definition}

Clearly $\rho_V=\lim_{n\to\infty}\rho_{V,n}$. It is straightforward to check that each $\mathcal{O}(N,r)$ is a $\tau_d$-closed subset of $\mathcal{R}$, and for any positive $r\in\mathbb{Q}$, 
$$\mathcal{R}^*=\bigcup_{N\in\mathbb{N}}\mathcal{O}(N,r). $$

\begin{definition} Let $\tau$ be the topology on $\mathcal{R}$ generated by $$\tau_d\cup\{\mathcal{O}(N,r)\,|\, N\in\mathbb{N}, r\in\mathbb{Q}, r>0\}.$$
Let $\tau^*=\tau\!\upharpoonright\!\mathcal{R}^*$.
\end{definition}

By some general facts known as the change of topology techniques (cf. Lemmas 13.2 and 13.3 of \cite{Kechris}), the topology $\tau$ on $\mathcal{R}$ is Polish. In fact, each $\mathcal{O}(N,r)$ becomes a $\tau$-clopen set, and therefore $\mathcal{R}^*$ becomes a $\tau$-open subset of $\mathcal{R}$. This shows that $\tau^*$ is Polish. We note the following property of $\tau^*$.

\begin{lemma}\label{tau*} Let $V_k, V\in \mathcal{R}^*$ for all $k\in\mathbb{N}$. If $V_k\to V$ in $\tau^*$, then $d(V_k, V)\to 0$ and $\rho_{V_k}\to \rho_V$ as $k\to\infty$.
\end{lemma}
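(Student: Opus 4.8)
\section*{Proof proposal for Lemma~\ref{tau*}}

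The plan is to exploit two facts: that $\tau$ refines $\tau_d$ (it is generated by $\tau_d$ together with the sets $\mathcal{O}(N,r)$), and that membership in a single $\mathcal{O}(N,r)$ already pins down the tail behaviour of $(\rho_{V,n})$. The $d$-convergence will then be used, via Lemma~\ref{common}, to transfer the value $\rho_{V,N}$ from $V$ to each $V_k$.

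First, $d(V_k,V)\to 0$ is immediate: every $\tau_d$-open set is $\tau$-open, so $\tau^*$-convergence implies $\tau_d$-convergence. This is the easy half, and it will be reused below.

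For $\rho_{V_k}\to\rho_V$, fix $\epsilon>0$ and choose a positive rational $r<\epsilon/3$; we may assume $V_k\neq V$ for all $k$ (for the indices with $V_k=V$ there is nothing to prove). Since $V\in\mathcal{R}^*$, the nondecreasing sequence $(\rho_{V,n})$ converges to $\rho_V<+\infty$ and is therefore Cauchy, so there is $N$ with $V\in\mathcal{O}(N,r)$; letting $n\to\infty$ in the inequality defining $\mathcal{O}(N,r)$ gives in particular $|\rho_{V,N}-\rho_V|\le r$. Now $\mathcal{O}(N,r)$ is $\tau$-open (indeed $\tau$-clopen, by the change-of-topology remark), hence a $\tau^*$-neighbourhood of $V$; since $V_k\to V$ in $\tau^*$ there is $K_1$ with $V_k\in\mathcal{O}(N,r)$ for all $k\ge K_1$, and again letting the index tend to infinity yields $|\rho_{V_k,N}-\rho_{V_k}|\le r$ for such $k$.

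It remains to show $\rho_{V_k,N}=\rho_{V,N}$ for all large $k$, and this is where the $d$-convergence enters. Let $(v_n)$ and $(w_m)$ be the canonical generating sequences of $V$ and $V_k$. Because each $q_n\ge 2$, the lengths $\mbox{lh}(v_n)$ are strictly increasing. Since $d(V_k,V)\to 0$, there is $K_2$ such that for $k\ge K_2$ we have $d(V_k,V)<2^{-\mbox{\scriptsize lh}(v_N)}$, which by the definition of $d$ forces a word $v$ lying in both canonical generating sequences with $\mbox{lh}(v)>\mbox{lh}(v_N)$. By strict monotonicity of the lengths along $(v_n)$, such $v$ equals $v_{n+1}$ for some $n\ge N$; and since $v$ lies in the canonical generating sequence of $V_k$, the word $V_k$ is built from $v=v_{n+1}$. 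Lemma~\ref{common} then gives $w_i=v_i$ for all $i\le n$, so in particular $w_N=v_N$ and $\rho_{V_k,N}=\rho_{w_N}=\rho_{v_N}=\rho_{V,N}$. Combining, for $k\ge\max(K_1,K_2)$ the triangle inequality yields
$$|\rho_{V_k}-\rho_V|\le|\rho_{V_k}-\rho_{V_k,N}|+|\rho_{V_k,N}-\rho_{V,N}|+|\rho_{V,N}-\rho_V|\le r+0+r<\epsilon,$$
which completes the argument.

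I expect the only delicate point to be the application of Lemma~\ref{common}: one must check that the common word supplied by the smallness of $d(V_k,V)$ really occurs at an index $\ge N+1$ in the canonical generating sequence of $V$ (this is exactly where strict monotonicity of the lengths is used) and that "$v$ belongs to the canonical generating sequence of $V_k$" legitimately yields "$V_k$ is built from $v$", so that Lemma~\ref{common} is applicable. Everything else is a routine Cauchy/triangle-inequality estimate.
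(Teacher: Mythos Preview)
Your proof is correct and follows essentially the same route as the paper: both use that $\tau^*$ refines $\tau_d$ for the first claim, then combine eventual membership in $\mathcal{O}(N,r)$ with $d$-convergence and Lemma~\ref{common} to identify $\rho_{V_k,N}=\rho_{V,N}$ for large $k$. Your final bookkeeping is in fact slightly cleaner than the paper's: the paper passes through the index $M$ of the longest common canonical word and bounds $\rho_{V_k}$ above and below separately via $\mathcal{O}(M,r)$, whereas you work directly at index $N$ and finish with a single triangle inequality, which is all that is needed.
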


\begin{proof}
Suppose $V_k\to V$ in $\tau^*$. Since $\tau^*$ is finer than $\tau_d$, it follows that $d(V_k,V)\to 0$. To see   that $\rho_{V_k}\to \rho_V$, let $\epsilon>0$.  Let $r<\epsilon$ be a positive rational. Since $V\in \mathcal{R}^*$ there is $N$ such that $V\in \mathcal{O}(N,r)$. Then for all $n \geq N$, $\rho_{V, n} - \rho_{V,N} \leq r < \epsilon$.  Since $\rho_V$ is the limit, as $n \to \infty$, of $\rho_{V, n}$ we know $\rho_{V} - \rho_{V,N} \leq r < \epsilon$, which implies $\rho_{V, N} > \rho_V - \epsilon$.  Since $\mathcal{O}(N,r)\cap\mathcal{R}^*$ is $\tau^*$-open, there is $K_0$ such that for all $k>K_0$, $V_k\in \mathcal{O}(N,r)$.  Let $v_N$ be the $N$-th element of the canonical generating sequence of $V$. Let $K_1\geq K_0$ be such that for all $k>K_1$, $d(V_k,V)<2^{-\mbox{\scriptsize lh}(v_N)}$.
Now we claim that for all $k>K_1$, $|\rho_{V_k}-\rho_V|<\epsilon$, which completes the proof.

Fix any $k>K_1$. Let $v_M\in\mathcal{F}$ be the $M$-th element of the canonical generating sequence for $V$ that is also the longest common element of the canonical generating sequences for $V_k$ and $V$. Then $d(V_k,V)=2^{-\mbox{\scriptsize lh}(v_M)}<2^{-\mbox{\scriptsize lh}(v_N)}$. It follows that $M>N$. By Lemma~\ref{common}, the first $M$ many elements of the canonical generating sequence of $V_k$ coincide with the first $M$ many elements of the canonical generating sequence of $V$. Thus $\rho_{V_k,i}=\rho_{V,i}$ for all $i\leq M$.

Since $V_k\in\mathcal{O}(N,r)\subseteq \mathcal{O}(M,r)$, we have that for all $m\geq M$, $\rho_{V_k, m}-\rho_{V_k, M}\leq r<\epsilon$. Letting $m\to\infty$, we obtain $\rho_{V_k}-\rho_{V, M}<\epsilon$. This implies
$$\rho_{V_k}<\rho_{V,M}+\epsilon\leq \rho_V+\epsilon.$$
On the other hand, $$\rho_{V_k}\geq \rho_{V_k, M}\geq \rho_{V_k,N}=\rho_{V,N}>\rho_V-\epsilon.$$
This completes the proof.
\end{proof}

\section{The main theorem}

We are going to show that $\mathcal{R}^*$, equipped with $\tau^*$, is a model for all rank one measure preserving transformations in the sense of Definition~\ref{defmodel}. Before stating the result rigorously, we need to specify the translation map from codes in $\mathcal{R}^*$ to transformations in $\Rone$.

For this purpose fix $V\in \mathcal{R}^*$. The rank one system coded by $V$ is $(X_V,\mu_V,\sigma)$. Let $(v_n)$ be the canonical generating sequence for $V$, and let $(q_n)$ and $(a_{n,i})$ be the cutting parameter and spacer parameter, respectively, that are given by $(v_n)$. Let $a=\mu_V(U_{0,0})=1/(1+\rho_V)$. Then $0<a<1$. To define a rank one transformation $T\in \Rone$, we start with $F_0=[0,a]$ and then follow the cutting and stacking process determined by the cutting parameter $(q_n)$ and spacer parameter $(a_{n,i})$. More specifically, assume $F_n$ has been defined as an interval of the form $[0,\alpha]$ and $\bigcup^{h_n-1}_{r=0}T^r(F_n)$ is an interval of the form $[0,\beta]$ with $\beta\geq \alpha$. Then define $\{F_{n,i}\,|\, 1\leq i\leq q_n\}$ to be a partition of $F_n$ into intervals of equal length $\alpha/q_n$, and $\{C_{n,i,j}\,|\, 1\leq i< q_n, 1\leq j\leq a_{n,i}\}$ be a collection of disjoint intervals of length $\alpha/q_n$, with
$$\bigcup^{q_n-1}_{i=1}\bigcup^{a_{n,i}}_{j=1} C_{n,i,j}=\left(\beta, \beta+\sum^{q_n-1}_{i=1}a_{n,i} \displaystyle\frac{\alpha}{q_n}\right]. $$
According to Definition~\ref{defgeo} we must define $F_{n+1}=F_{n,1}=[0,\alpha/q_n]$ and the definitions of $T^r(F_{n+1})$ for all $1\leq r\leq h_{n+1}-1$ are also uniquely determined. At the end of this stage of the definition, $T$ is well defined except for the top level of the tower, formally $T^{h_{n+1}-1}(F_{n+1})$, and for the future spacers, formally $ (\beta+\alpha/q_n\sum^{q_n-1}_{i=1}a_{n,i} ,1]$. This finishes the inductive step of the definition. Note that this construction corresponds exactly to the way $v_{n+1}$ is built from $v_n$, with the new spacers correspondent to new $1$s inserted between diffferent copies of $v_n$.
Note that all the spacers eventually form a partition of the interval $(a,1]$, and the transformation $T$ is Lebesgue measure preserving. It is straightforward (albeit tedious) to check that $([0,1], T)\cong (X_V,\sigma)$. This translation map from $V$ to $T$ is denoted by $\pi: \mathcal{R}^*\to\Rone$.

The following is the main theorem of this paper.

\begin{theorem} $(\mathcal{R}^*,\pi)$ is a model for all rank one measure preserving transformations. That is, $\pi: \mathcal{R}^*\to \Rone$ is a continuous function such that for a comeager set $C\subseteq \Rone$ and for all $T\in C$, $\{V\in \mathcal{R}^*\,|\, \pi(V)\cong T\}$ is dense in $\mathcal{R}^*$. Moreover, $\pi$ is isomorphism-preserving.
\end{theorem}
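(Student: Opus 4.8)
The plan is to check the three requirements of Definition~\ref{defmodel} in turn. Isomorphism‑preservation is essentially built into the construction of $\pi$: for $T=\pi(V)$ the coding map $\phi\colon[0,1]\to X_V$ that sends $x$ to the name recording, for each $j\in\mathbb Z$, whether $T^jx$ lies in a ``$v_n$‑column'' level or in a spacer level of the stage‑$n$ tower (this stabilizes for a.e.\ $x$ because $\lambda(\bigcup_{r<h_n}T^rF_n)\to1$) is an a.e.\ bijection with $\phi\circ T=\sigma\circ\phi$ and $\phi_*\lambda=\mu_V$; this is the ``straightforward but tedious'' verification referred to in the construction. The two substantive points are continuity of $\pi$ and the density clause.

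For continuity, suppose $V_k\to V$ in $\tau^*$. By Lemma~\ref{tau*}, $d(V_k,V)\to0$ and $\rho_{V_k}\to\rho_V$, so the base lengths $a_k=1/(1+\rho_{V_k})$ converge to $a=1/(1+\rho_V)$. Fix $\epsilon>0$ and $n$ with $\lambda(\bigcup_{r<h_n}T^rF_n)>1-\epsilon$ and $1/\prod_{i<n}q_i<\epsilon$ (for $T=\pi(V)$). For large $k$ we have $d(V_k,V)<2^{-\mathrm{lh}(v_n)}$, so by Lemma~\ref{common} the canonical generating sequences of $V_k$ and $V$ agree through $v_n$; hence $\pi(V_k)$ and $\pi(V)$ run the \emph{same} cutting‑and‑stacking recipe through stage $n$, the stage‑$n$ picture of $\pi(V_k)$ being the picture of $\pi(V)$ scaled by $a_k/a$ — in particular the $r$‑th level of the stage‑$n$ tower is $a\cdot L_r$ (resp.\ $a_k\cdot L_r$) for a fixed rational interval $L_r$ depending only on $v_0,\dots,v_n$, and the transformation translates it by $a\tau_r$ (resp.\ $a_k\tau_r$) for a fixed rational $\tau_r$. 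Let $S$ (resp.\ $S_k$) agree with $\pi(V)$ (resp.\ $\pi(V_k)$) on the stage‑$n$ tower $[0,\beta_n]$ and equal the identity off it. Then $\lambda(\{S\ne\pi(V)\})\le|F_n|+(1-\beta_n)<2\epsilon$, and for large $k$ likewise $\lambda(\{S_k\ne\pi(V_k)\})<2\epsilon$, so $\lambda(S(A)\triangle\pi(V)(A))<2\epsilon$ and $\lambda(S_k(A)\triangle\pi(V_k)(A))<2\epsilon$ for every Borel $A$; and $S^{-1}S_k$ moves points by $O(|a_k-a|)$ off a set of measure $O(|a_k-a|)$, hence tends to the identity in measure as a map $[0,1]\to[0,1]$, which gives $\lambda(S_k(A)\triangle S(A))=\lambda(A\triangle S^{-1}S_k(A))\to0$ for every dyadic interval $A$. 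Combining, $\limsup_k\lambda(\pi(V_k)(A)\triangle\pi(V)(A))\le4\epsilon$ for every dyadic interval, and since $\epsilon$ is arbitrary and dyadic intervals are dense in the measure algebra, $\pi(V_k)\to\pi(V)$ weakly.

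For the density clause, set $C=\{T\in\Rone:T\cong\pi(V)\text{ for some }V\in\mathcal R^*\}$; this is comeager in $\Rone$, since it contains every weakly mixing rank one transformation (such a $T$ is isomorphic to a constructive rank one transformation whose defining word is aperiodic, hence lies in $\mathcal R^*$) and weak mixing is generic. It then suffices to prove the following \emph{re‑coding lemma}: for every $V_0\in\mathcal R^*$ and every nonempty $\tau^*$‑open $U\subseteq\mathcal R^*$ there is $V\in U$ with $X_V\cong X_{V_0}$. Granting it, for $T\in C$ pick $V_0$ with $\pi(V_0)\cong T$; the lemma then puts a code of $X_{V_0}$ into every $\tau^*$‑open set, so $\{V:\pi(V)\cong T\}$ is $\tau^*$‑dense, as required.

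To prove the re‑coding lemma, shrink $U$ to a basic set $\{W:d(W,W_0)<2^{-L}\}\cap\mathcal O(N_0,r)$ with $\rho_{W_0}<\rho_{W_0,N_0}+r$, write $(w_m)$ for the canonical generating sequence of $W_0$, and fix $k$ with $\mathrm{lh}(w_k)>L$ and $\rho_{W_0}-\rho_{w_k}$ small. Let $(v^0_m)$, with parameters $(q^0_m),(a^0_{m,i})$, be the canonical generating sequence of $V_0$, and choose $m$ so large that $\rho_{V_0}-\rho_{v^0_m}$ is tiny and $\mathrm{lh}(v^0_m)\ge2\,\mathrm{lh}(w_k)$. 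Build $V$ from the generating sequence $w_0,\dots,w_k$, then a two‑stage ``bridge'' designed so that $h^V_{k+2}=\mathrm{lh}(v^0_m)$ while adding only a tiny amount to $\rho$ (take $q^V_k=2$ with no stage‑$k$ spacers, then $q^V_{k+1}=\lfloor\mathrm{lh}(v^0_m)/(2\,\mathrm{lh}(w_k))\rfloor$ with stage‑$(k+1)$ spacers summing to $\mathrm{lh}(v^0_m)\bmod 2\,\mathrm{lh}(w_k)$), and thereafter $q^V_{k+2+j}=q^0_{m+j}$, $a^V_{k+2+j,i}=a^0_{m+j,i}$ for all $j\ge0$. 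Then $d(V,W_0)\le2^{-\mathrm{lh}(w_k)}<2^{-L}$ and the tail variation of $\rho_V$ stays below $r$, so $V\in U$; and from stage $k+2$ on $V$ runs exactly $V_0$'s instructions from stage $m$ on, over a tower of the same height $\mathrm{lh}(v^0_m)$ — and, since both constructions are normalized to fill $[0,1]$ in the limit, over a base of the same measure — so one assembles compatible isomorphisms of the successive partial structures into a measure isomorphism $X_V\cong X_{V_0}$. I expect the main obstacle to be precisely this lemma: one must simultaneously respect the $\tau^*$‑neighborhood — which fixes a long prefix of the canonical generating sequence and caps the tail growth of $\rho$ — and steer the remaining parameters onto the \emph{measure} isomorphism type of $X_{V_0}$. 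Balancing the $\rho$‑budget of the bridge against the requirement that tower heights match, and verifying carefully that matching heights plus the normalization forces isomorphism of the resulting systems, is the real work; the rest is bookkeeping with the canonical generating sequence via Lemmas~\ref{common}, \ref{cgsprop} and \ref{tau*}.
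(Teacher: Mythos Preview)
Your continuity argument is essentially the same idea as the paper's: both exploit that convergence in $\tau^*$ forces agreement of the canonical generating sequences through any fixed stage together with convergence of the base lengths $a_k\to a$, and then compare the two stage-$n$ pictures by an affine rescaling. The paper packages the comparison via the auxiliary functions $f(x)=\lambda(A_x\triangle A_a)$ and $g(x)=\lambda(T_x(B_x)\triangle T_a(B_a))$ on the algebra of tower-level sets, but this is only cosmetically different from your $S,S_k$ argument.

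The density argument, however, is genuinely different. The paper does \emph{not} build a measure-isomorphic code by a ``bridge plus copy-the-parameters'' construction. Instead it proves the stronger statement that every \emph{topological} isomorphism class in $\mathcal R^*$ is $\tau^*$-dense, using the replacement-scheme characterization of topological conjugacy (Theorem~\ref{topiso}): given $U,V\in\mathcal R^*$ and a basic $\tau^*$-open set $\mathcal U\ni U$, one picks $u=u_{n_0}$ deep in the canonical sequence of $U$, then a very long $v=v_{m_0}$ in that of $V$, sets $w=u^{k_0}1^{t_0}u$ with $\mathrm{lh}(w)=\mathrm{lh}(v)$, and defines $W$ from $V$ by the replacement scheme $(v,w)$. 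The comeager set is simply the non-odometers. This buys two things: the isomorphism $X_W\cong X_V$ is automatic and in fact topological, and the canonical generating sequence of $W$ is explicit (it is $u_0,\dots,u_{n_0-1},u,w,\dots$), so verifying $W\in\mathcal O(N_j,r_j)$ reduces to a clean estimate on $\rho_w-\rho_u$ and on $\rho_{w_i}-\rho_w$ via Lemma~\ref{rhoincrease}.

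Your route is workable but has more to check. First, your bridge with $q^V_k=2$ and no spacers makes $v^V_{k+1}=w_k^2$ simply built from $w_k$, so you do not automatically know that $w_0,\dots,w_k$ are the first terms of the \emph{canonical} generating sequence of $V$; since both the metric $d$ and the sets $\mathcal O(N,r)$ are defined via the canonical sequence, you must either verify canonicity directly or tweak the bridge (e.g.\ insert a single asymmetric spacer) to break simple building. Second, your isomorphism step---matching tower heights at one stage, copying all later parameters, and invoking the normalization to conclude the base measures agree---is correct (since $h_n\lambda(F_n)\to1$ forces $\lambda(F^{V}_{k+2})=\lambda(F^{V_0}_m)$), but this deserves to be stated and proved rather than asserted; the paper sidesteps it entirely by getting a topological conjugacy for free from Theorem~\ref{topiso}. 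Finally, when bounding $\rho_V-\rho_{w_k}$ you must remember that the tail increments of $\rho$ in $V$ are those of $V_0$ rescaled by $Z(v^0_m)/Z(v^V_{k+2})$, not equal to them, so ``$\rho_{V_0}-\rho_{v^0_m}$ tiny'' is not literally what you need; again the paper handles the analogous estimate explicitly via the ratio $Z(v)/Z(w)$.
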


As remarked earlier, this gives the desired corollary by Lemma~\ref{lemma12}.

\begin{corollary} The spaces $(\mathcal{R}^*,\tau^*)$ and $\Rone$ (with the weak topology) have the same generic dynamical properties.
\end{corollary}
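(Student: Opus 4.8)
The plan is to establish the main Theorem — that $(\mathcal{R}^*,\pi)$ is a model in the sense of Definition~\ref{defmodel} — after which the Corollary is immediate from Lemma~\ref{lemma12}. Three things must be checked: that $\pi$ is isomorphism-preserving, that $\pi\colon(\mathcal{R}^*,\tau^*)\to\Rone$ is continuous, and that the fiber $\{V\in\mathcal{R}^*\mid\pi(V)\cong T\}$ is dense in $\mathcal{R}^*$ for a comeager set of $T\in\Rone$. The first is built into the construction of $\pi$: the cutting-and-stacking recipe was arranged so that $([0,1],T)\cong(X_V,\sigma)$, and $X_V$ is exactly the symbolic rank one system coded by $V$, so $\pi(V)\cong V$. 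For continuity I would argue pointwise. Suppose $V_k\to V$ in $\tau^*$; by Lemma~\ref{tau*} this gives both $d(V_k,V)\to 0$ and $\rho_{V_k}\to\rho_V$. The first, together with Lemma~\ref{common} and the uniqueness of canonical generating sequences (Theorem~\ref{cgs}), implies that for each fixed $n$ the canonical generating sequence of $V_k$ agrees with that of $V$ on its first $n$ terms once $k$ is large; hence for large $k$ the first $n$ stages of the recipe defining $\pi(V_k)$ use the same combinatorial data $(q_i)_{i<n},(a_{i,j})_{i<n}$ as the recipe defining $\pi(V)$, the only difference being the length $a_k=1/(1+\rho_{V_k})$ of the base interval versus $a=1/(1+\rho_V)$. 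Consequently $\pi(V_k)$ agrees, on all of its stage-$n$ tower except the top level, with the linear rescaling onto that interval of the corresponding portion of $\pi(V)$, and the rescaling ratio $a/a_k$ tends to $1$ because $\rho_{V_k}\to\rho_V$. Since the stage-$n$ towers exhaust $[0,1]$ in measure as $n\to\infty$ — this is exactly where $\rho_V<\infty$ is used — a routine iterated-limit estimate gives $\lambda(\pi(V_k)(A)\triangle\pi(V)(A))\to 0$ first for $A$ a level of some tower and then, since such sets generate the Borel $\sigma$-algebra, for every Borel $A$; that is, $\pi(V_k)\to\pi(V)$ weakly.

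The substantive part is the density of fibers, and I would first pin down the basic $\tau^*$-neighborhoods. Since each $\mathcal{O}(N,r)$ is $\tau$-clopen, a basic neighborhood of a point of $\mathcal{R}^*$ can be taken to be $\mathcal{U}(v_n,r):=\{V\in\mathcal{R}^*\mid v_0,\dots,v_n\text{ is an initial segment of the canonical generating sequence of }V,\ \rho_V\le\rho_{v_n}+r\}$, where $v_0,\dots,v_n$ is an initial segment of the canonical generating sequence of some infinite rank one word and $r\in\mathbb{Q}$, $r>0$ (here one uses $\rho_V\ge\rho_{V,n}=\rho_{v_n}$, by the monotonicity coming from Lemma~\ref{rhoincrease}). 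There are only countably many sets $\mathcal{U}(v_n,r)$, so it suffices to produce a comeager $C\subseteq\Rone$ such that each $T\in C$ is, for every pair $(v_n,r)$, isomorphic to $\pi(V)$ for some $V\in\mathcal{U}(v_n,r)$; then for $T\in C$ the fiber meets every basic neighborhood of $\mathcal{R}^*$ and so is dense. Geometrically, $T\cong\pi(V)$ for some $V\in\mathcal{U}(v_n,r)$ says that $T$ is isomorphic to a rank one transformation whose cutting-and-stacking begins with the stage-$n$ tower coded by $v_n$ — which, under the constraint $\rho_V\le\rho_{v_n}+r$, occupies measure at least $(1+\rho_{v_n})/(1+\rho_{v_n}+r)>1-r$ — and thereafter introduces total spacer-measure at most $r$.

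To produce such a $C$ I would argue as follows. For every $T\in\Rone$ there is, by the correspondence between rank one measure preserving systems and elements of $\mathcal{R}^*$, some $W\in\mathcal{R}^*$ with $\pi(W)\cong T$. The task is therefore to replace $W$, for comeagerly many $T$, by a word $V\in\mathcal{U}(v_n,r)$ coding a measure-isomorphic system — equivalently, to show that the generic rank one system admits, for every prescribed achievable canonical initial segment $v_0,\dots,v_n$ and every $r>0$, an isomorphic symbolic presentation whose canonical generating sequence begins with $v_0,\dots,v_n$ and whose spacer-density limit exceeds $\rho_{v_n}$ by at most $r$. This is the crux, and where I expect the main difficulty to lie. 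I would attempt it by a back-and-forth construction between a refining tower sequence coming from $W$ and the prescribed nearly-full initial tower coded by $v_n$: at each step one refines the tower coming from $W$, extends the candidate continuation of $v_n$ by a cutting of size at least $2$ together with only as many spacers as the alignment with $W$ forces, and checks that the total spacer-measure introduced stays below $r$ — the last point being exactly where genericity of $T$ is used, the combinatorial bookkeeping being governed by Theorem~\ref{cgs}, Lemmas~\ref{cgsprop} and~\ref{common}, and the isomorphism criteria for symbolic rank one systems, and the flexibility of cutting and stacking. Granting this, $C:=\bigcap_{(v_n,r)}\{T\in\Rone\mid T\cong\pi(V)\text{ for some }V\in\mathcal{U}(v_n,r)\}$ is comeager, every $T\in C$ has $\tau^*$-dense fiber, so $(\mathcal{R}^*,\pi)$ is a model; the Corollary then follows from Lemma~\ref{lemma12}.
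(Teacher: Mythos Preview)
Your overall plan---prove the main theorem and then invoke Lemma~\ref{lemma12}---is exactly the paper's, and your continuity sketch is essentially the paper's argument (the paper packages the ``rescaling ratio tends to $1$'' step as continuity of two auxiliary functions $f(x),g(x)$ at $x=a$, but the content is the same).

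The density argument, however, has a genuine gap. You correctly isolate the crux---for a comeager set of $T$, and for every prescribed canonical initial segment $v_0,\dots,v_n$ and $r>0$, produce $V$ with $\pi(V)\cong T$ whose canonical sequence begins $v_0,\dots,v_n$ and with $\rho_V\le\rho_{v_n}+r$---but then you only gesture at a ``back-and-forth'' between tower refinements and write ``Granting this.'' There is no mechanism in your outline that forces the canonical generating sequence of the new word to begin with the \emph{prescribed} $v_0,\dots,v_n$ while keeping the system isomorphic to the given one; ``alignment with $W$'' does not by itself do this, and the spacer bookkeeping you allude to is not supplied.

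The paper closes this gap with a specific tool you do not invoke: the replacement-scheme characterization of \emph{topological} isomorphism (Theorem~\ref{topiso}). The comeager set is simply $C=\{$non-odometers$\}$, so every $T\in C$ already has some code $V\in\mathcal{R}^*$. The paper then proves the stronger statement that each $\approx$-class (topological isomorphism class) is $\tau^*$-dense: given a basic neighborhood determined by $u=u_{n_0}$ and finitely many $\mathcal{O}(N_j,r_j)$, and given any $V\in\mathcal{R}^*$ with canonical sequence $(v_m)$, one chooses $m_0$ large and sets $w=u^{k_0}1^{t_0}u$ with $\mbox{lh}(w)=\mbox{lh}(v_{m_0})$ and $\mbox{lh}(u)\le t_0<2\,\mbox{lh}(u)$, then lets $W$ be obtained from $V$ via the replacement scheme $(v_{m_0},w)$. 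This single explicit move simultaneously (i) makes $V\approx W$, (ii) forces the canonical sequence of $W$ to begin with $u_0,\dots,u_{n_0}$ (because $w$ is not built simply from $u$ and not built from anything longer than $u$), and (iii) controls $\rho_W$ via Lemma~\ref{rhoincrease} and the choice of $m_0$, landing $W$ in each $\mathcal{O}(N_j,r_j)$. Your proposal is missing exactly this construction; the replacement-scheme idea is what your ``back-and-forth'' would have to discover.
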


The rest of the section is devoted to a proof of the main theorem.

That $\pi$ is isomorphism-preserving has been noted above. We need to show that $\pi$ is continuous and that it satisfies a density property in the definition of a model.

We first verify the continuity of $\pi$. For this, let $V_k\to V$ in $\tau^*$. Denote $T_k=\pi(V_k)$ and $T=\pi(V)$. We will show that $T_k\to T$ weakly, i.e., for all Borel $A\subseteq [0,1]$,
$\lambda(T_k(A)\triangle T(A))\to 0$. Let $\epsilon>0$ and $A\subseteq [0,1]$ be an arbitrary Borel set. We need to find $K\in\mathbb{N}$ such that for all $k\geq K$, $\lambda(T_k(A)\triangle T(A))<\epsilon$.

Let $\mathcal{A}$ be the $\sigma$-algebra generated by all sets $A$ of the form
$\bigcup_{s\in S} T^s(F_n)$, where $n\in\mathbb{N}$, $S\subseteq \{0,1,\dots, h_n-1\}$ and $F_n$ as in the definition of $T$ (i.e., $A$ is a union of some levels of the Rokhlin tower constructed up to a certain stage). Then $\mathcal{A}$ is dense as a measure subalgebra of all Borel sets. Thus in our proof of the continuity of $\pi$ we may assume that $A\in\mathcal{A}$.

Let $(v_n)$ be the canonical generating sequence for $V$. Fix $n_0$ and $S_0\subseteq \{0,1,\dots, h_{n_0}-1\}$ so that $A=\bigcup_{s\in S_0}T^s(F_{n_0})$. Note that for any $n\geq n_0$, $A=\bigcup_{s\in S} T^s(F_n)$ for an appropriate $S\subseteq \{0,1,\dots h_n-1\}$. Therefore, without loss of generality, we may assume that $\lambda(F_{n_0})<\epsilon/4$.

Now consider the cutting and stacking process we have followed to obtain the Rokhlin tower corresponding to $v_{n_0}$ as described above. If this process is applied to $F_{x,0}=[0,x]$, with $0<x<1$ a variable, we would obtain a different Rokhlin tower as $x$ varies. Let $F_{x,n_0}$ be the first level of the resulting tower, $T_x$ be the partial transformation defined in this process, and $$A_x=\bigcup_{s\in S_0}T_x^s(F_{x,n_0}).$$
Note that that $T=T_a$ and $A=A_a$, where $a=\mu_V(U_{0,0})=1/(1+\rho_V)$. Define
$$ f(x)=\lambda(A_x\triangle A)=\lambda(A_x\triangle A_a). $$
Then $f(x)$ is continuous at $x=a$.
Furthermore, let
$$ B_x=\bigcup_{s\in S_0-\{h_{n_0}-1\}} T^s_x(F_{x,n_0})=A_x- T_x^{h_{n_0}-1}(F_{x,n_0}). $$
Then $T_x$ is well defined on $B_x$. Define
$$ g(x)=\lambda(T_{x}(B_x)\triangle T_a(B_a))=\lambda(T_x(B_x)\triangle T(B_a)). $$
Then $g(x)$ is also continuous at $x=a$.

By Lemma~\ref{tau*} we have $d(V_k,V)\to 0$. It follows that there is $K_0\in\mathbb{N}$ such that for all $k\geq K_0$, $v_{n_0}$ is an element of the canonical generating sequence for $V_k$. Thus the construction of $T_k$, for $k\geq K_0$, follows the cutting and stacking process corresponding to $v_{n_0}$ applied to $x_k=1/(1+\rho_{V_k})$. By Lemma~\ref{tau*} again $\rho_{V_k}\to\rho_V$ and therefore $x_k\to a$.
Let $A_k=A_{x_k}$ and $B_k=B_{x_k}$. Since $x_k\to a$ and $\lambda(F_{n_0})=\lambda(F_{a, n_0})<\epsilon/4$, there is $K_1\geq K_0$ such that for all $k>K_1$, $\lambda(F_{x_k,n_0})<\epsilon/4$. Thus for all $k\geq K_1$, $\lambda(A_k\triangle B_k)<\epsilon/4$. We also have $\lambda(A\triangle B_a)<\epsilon/4$. It follows from the continuity of $f$ that there is $K_2\geq K_1$ such that for all $k\geq K_2$,
$$ \lambda(A_k\triangle A)<\displaystyle\frac{\epsilon}{4}. $$
Similarly, it follows from the continuity of $g$ that there is $K\geq K_2$ such that for all $k\geq K$,
$$ \lambda(T_k(B_k)\triangle T(B_a))<\displaystyle\frac{\epsilon}{4}. $$
Thus for all $k\geq K$,
$$\begin{array}{rcl} & & \lambda(T_k(A)\triangle T(A))\\ \\
&\leq &\lambda(T_k(A)\triangle T_k(A_k))+\lambda(T_k(A_k)\triangle T_k(B_k)) +\lambda(T_k(B_k)\triangle T(B_a))\\
& &+\lambda(T(B_a)\triangle T(A)) \\ \\
&\leq & \lambda(A_k\triangle A)+\lambda(A_k\triangle B_k)+\lambda(T_k(B_k)\triangle T(B_a))+\lambda(B_a\triangle A) \\ \\
&\leq & \epsilon/4+\epsilon/4+\epsilon/4+\epsilon/4=\epsilon.
\end{array}
$$
This finishes the proof that $\pi$ is continuous.

To complete the proof of the main theorem, it remains to verify that for a comeager set $C\subseteq\Rone$ and for all $T\in C$, $\{V\in\mathcal{R}^*\,|\, \pi(V)\cong T\}$ is dense in $\mathcal{R}^*$. Our comeager set $C$ is the set of all rank one transformations that are not (isomorphic to) odometer maps. By the constructive geometric definition, each rank one transformation $T$ corresponds to some generating sequence $(v_n)$, and therefore to the infinite rank one word $V$ that is the limit of $v_n$. If $V$ is periodic, then $T$ is isomorphic to an odometer map. Thus if $T\in C$, then the infinite rank one word $V$ corresponding to it must be nondegenerate, hence $V\in\mathcal{R}^*$. This shows that if $T\in C$, then the set $\{V\in\mathcal{R}^*\,|\, \pi(V)\cong T\}$ is nonempty. To complete the proof it suffices to show that every measure-theoretic isomorphism equivalence class in $\mathcal{R}^*$ is dense. We will instead show a stronger statement that every topological isomorphism equivalence class in $\mathcal{R}^*$ is dense.

For this, we recall some concept and result from \cite{GaoHill1} about topological isomorphism in $\mathcal{R}$. If $V, W\in\mathcal{R}$ and $v, w\in\mathcal{F}$ with $\mbox{lh}(v)=\mbox{lh}(w)$, the pair $(v,w)$ is called a {\it replacement scheme} for $V$ and $W$ if $V$ is built from $v$ and $W$ is built from $w$, say
$$ V=v1^{a_0}v1^{a_1}v\dots \mbox{ and } W=w1^{b_0}w1^{b_1}w\dots, $$ and $a_i=b_i$ for all $i\in\mathbb{N}$. One of the main results of \cite{GaoHill1} is the following theorem characterizing the topological isomorphism between rank one systems.

\begin{theorem}[\cite{GaoHill1} Corollary 3.7]\label{topiso} For $V, W\in\mathcal{R}$, $(X_V,\sigma)$ and $(X_W,\sigma)$ are topologically isomorphic iff there is a replacement scheme $(v,w)$ for $V$ and $W$. Moreover, the $v$ and $w$ occurring in the replacement scheme may be required to be elements of the canonical generating sequences for $V$ and $W$, respectively.
\end{theorem}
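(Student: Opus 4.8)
The plan is to treat the two implications separately and then upgrade any replacement scheme to one with canonical words, so the \emph{moreover} clause comes essentially for free.

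\emph{Replacement scheme $\Rightarrow$ topological isomorphism.} Suppose $(v,w)$ is a replacement scheme with $\mbox{lh}(v)=\mbox{lh}(w)$, $V=v1^{a_0}v1^{a_1}v\cdots$ and $W=w1^{a_0}w1^{a_1}w\cdots$ sharing the spacer sequence $(a_i)$. The first step is a recognizability lemma: since $V$ is aperiodic and every element of $\mathcal{F}$ contains a $0$, there is a window size $L$ (on the order of $\mbox{lh}(v)$) so that for every $x\in X_V$ and every coordinate $k$, the $L$-window of $x$ around $k$ determines whether $k$ lies in a $v$-block (and at what offset) or in a spacer; in particular each $x$ has a unique parse into $v$-blocks and spacer $1$s. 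I would then define $\phi:X_V\to X_W$ by the local rule that replaces, in place, the symbol of $v$ at a given offset by the symbol of $w$ at the same offset and fixes every spacer $1$. Because $\mbox{lh}(v)=\mbox{lh}(w)$ this is coordinate-preserving, so $\phi$ is a sliding block code commuting with $\sigma$, and it carries $V$-windows to $W$-windows and hence maps $X_V$ into $X_W$. The symmetric scheme $(w,v)$ produces a sliding block code $X_W\to X_V$ that is a two-sided inverse, so $\phi$ is a topological isomorphism.

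\emph{Topological isomorphism $\Rightarrow$ replacement scheme.} Let $\phi:X_V\to X_W$ be a topological isomorphism. By the Curtis--Hedlund--Lyndon theorem, $\phi$ and $\phi^{-1}$ are sliding block codes of some common radius $N$. Fix $v=v_n$ in the canonical generating sequence of $V$ with $\mbox{lh}(v_n)\gg N$. Because $\phi$ has radius $N$, its output on the interior of any occurrence of $v_n$ depends only on $v_n$, so all occurrences of $v_n$ are sent to a single interior word $\omega$ of length $\mbox{lh}(v_n)-2N$; applying the same reasoning to $\phi^{-1}$ shows the induced block correspondence is injective. The heart of the argument is to upgrade this interior agreement to an exact block-to-block match: one shows $\omega$ extends to a unique $w\in\mathcal{F}$ with $\mbox{lh}(w)=\mbox{lh}(v_n)$ from which $W$ is built, and that two consecutive $v_n$-blocks of $V$ separated by $1^{a}$ are carried onto consecutive $w$-blocks of $W$ separated by \emph{exactly} $1^{a}$, forcing $a_i=b_i$ throughout. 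This spacer-preservation is the main obstacle: a finite-radius conjugacy only sees block interiors, and making the inter-block spacer lengths agree requires the aperiodicity of $V$ together with the rigidity of the canonical generating sequences, which I would bring in through Lemmas \ref{cgsprop} and \ref{common}. The outcome is a replacement scheme $(v_n,w)$.

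\emph{The moreover clause.} Finally, any replacement scheme can be upgraded so that both words are canonical. Given $(v,w)$, choose $v_n$ in the canonical generating sequence of $V$ with $\mbox{lh}(v_n)\ge\mbox{lh}(v)$; by Lemma \ref{cgsprop}, $v_n$ is built from $v$, say $v_n=v1^{c_1}v\cdots v1^{c_{k-1}}v$. Grouping the $w$-blocks of $W$ by the identical pattern yields $w'=w1^{c_1}w\cdots w1^{c_{k-1}}w$ with $\mbox{lh}(w')=\mbox{lh}(v_n)$, and since the spacer sequences of $V$ and $W$ coincide, $(v_n,w')$ is again a replacement scheme. That $w'$ is canonical for $W$ follows by contradiction: a pair witnessing non-canonicity of $w'$ as in Definition \ref{defcgs}(c) transfers, under the length-preserving replacement $w'\mapsto v_n$, to a pair witnessing non-canonicity of $v_n$ for $V$, where Lemma \ref{simplybuilt} ensures the built-simply clause survives the transfer. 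This contradicts the choice of $v_n$, so $(v_n,w')$ is a replacement scheme with both words drawn from the respective canonical generating sequences.
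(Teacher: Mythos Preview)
The paper does not prove this theorem: it is quoted verbatim as Corollary~3.7 of \cite{GaoHill1} and used as a black box, so there is no in-paper argument to compare your proposal against.

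On its own merits, your outline is the expected one, but two steps are not yet arguments. In the converse direction you correctly flag spacer preservation as the crux, yet invoking Lemmas~\ref{cgsprop} and~\ref{common} does not by itself force $a_i=b_i$: those lemmas compare generating sequences of a \emph{single} infinite word, whereas what you need is control over how a radius-$N$ sliding block code treats the $2N$ boundary symbols of each $v_n$-block and the adjacent spacer runs. That boundary analysis is the substance of the result in \cite{GaoHill1} and cannot be replaced by an appeal to canonicity. In the \emph{moreover} clause, your transfer-of-witnesses argument has a gap: if $(u,z)$ witnesses non-canonicity of $w'$ in the sense of Definition~\ref{defcgs}(c), then $u$ is a word shorter than $w'$ from which $W$ is built, but your replacement scheme $(v_n,w')$ only operates at scales $\ge\mbox{lh}(w')$, so there is no evident $u^*$ from which $V$ is built to which $u$ would correspond; Lemma~\ref{simplybuilt} governs built-simply relations among three words of a single system and does not supply the missing cross-system correspondence. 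A cleaner route is to obtain canonicity of the target word directly inside the converse argument (choosing $n$ large enough and using the structure forced by the conjugacy), rather than as a post-hoc upgrade.
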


For $V, W\in\mathcal{R}$, denote $V\approx W$ if there is a replacement scheme for $V$ and $W$. Then $\approx$ is an equivalence relation. Note that $\mathcal{R}^*$ is an $\approx$-invariant subset of $\mathcal{R}$.

We show that given $U, V\in \mathcal{R}^*$ and $\tau^*$-open set $\mathcal{U}$ with $U\in\mathcal{U}$, there is $W\in\mathcal{U}$ with $V\approx W$. Without loss of generality we may assume $\mathcal{U}$ is a basic $\tau^*$-open set of the form
$$ \{ U'\in\mathcal{R}^*\,|\, d(U', U)<\epsilon\}\cap \mathcal{O}(N_1,r_1)\cap\dots\cap\mathcal{O}(N_p,r_p) $$
for some $\epsilon>0$, $p\in\mathbb{N}$, $N_1,\dots, N_p\in\mathbb{N}$, and positive $r_1,\dots,r_p\in\mathbb{Q}$.

Let $(u_n)$ and $(v_m)$ be the canonical generating sequences for $U$ and $V$, respectively. Fix a large enough $n_0$ such that $n_0> N_1,\dots, N_p$ and $2^{-\mbox{\scriptsize lh}(u_{n_0})}<\epsilon$.
For notational simplicity we denote $u_{n_0}$ by $u$ in the rest of the proof.

Our plan is to find a large enough $m_0$ and, letting $v=v_{m_0}$, define a word $w\in\mathcal{F}$ by
$$ w=u^{k_0}1^{t_0}u $$
so that $\mbox{lh}(v)=\mbox{lh}(w)$ and $\mbox{lh}(u)\leq t_0< 2\mbox{lh}(u)$. Note that $k_0$ and $t_0$ are uniquely determined once we fix $\mbox{lh}(v)$. In fact $k_0=\lfloor \mbox{lh}(v)/\mbox{lh}(u)\rfloor-2$. We will make sure that $k_0\geq 2$. We then obtain an infinite rank one word $W$ by using $(v,w)$ as a replacement scheme. It will be the case that the first $n_0+1$ many elements of the canonical generating sequence for $W$ are $u_i$, $1\leq i<n_0$, $u$ and $w$. Thus we fullfil the requirements $V\approx W$ and $d(W,U)<\epsilon$. We will show that as long as we choose $m_0$ large enough, the resulting $W$ will be in $\mathcal{O}(N_1,r_1)\cap\dots\cap \mathcal{O}(N_p,r_p)$.

Let $M_0$ be the least such that for all $m\geq M_0$, $\mbox{lh}(v_m)\geq 4\mbox{lh}(u)$. For each $m\geq M_0$, consider the quantity
$$ c_m=\displaystyle\frac{Z(v_m)}{\lfloor \mbox{lh}(v_m)/\mbox{lh}(u)\rfloor-1}. $$
It is straightforward to see that the sequence $c_m$, $m\geq M_0$, is nonincreasing. Thus $c_m\leq c_{M_0}$ for all $m\geq M_0$. For notational simplicity we denote $c_{M_0}$ by $c$ in the rest of the proof.

For each $1\leq j\leq p$, let
$$ \delta_j=r_j-\rho_{U,n_0}+\rho_{U,N_j}. $$
For each $1\leq j\leq p$, since $U\in \mathcal{O}(N_j,r_j)$, we have $|\rho_{U,n_0}-\rho_{U,N_j}|\leq r_j$. Noting that $(\rho_{U,n})$ is a nondecreasing sequence (Lemma~\ref{rhoincrease}) that is not eventually constant ($U$ is nondegenerate), and $n_0$ was chosen to be greater than each $N_j$, we actually have
$$ 0\leq \rho_{U,n_0}-\rho_{U,N_j}<r_j. $$
Thus $\delta_j>0$. Let
$$ \delta=\mbox{min}\{\delta_1,\dots,\delta_p\}. $$
Let $M_1\geq M_0$ be the such that for all $m\geq M_1$,  
$$ \displaystyle\frac{2\mbox{lh}(u)}{\lfloor \mbox{lh}(v_m)/\mbox{lh}(u)\rfloor-1}<\displaystyle\frac{\delta}{2}. $$

Finally, let $r$ be a positive rational such that $2<\frac{\delta}{2c}$. Let $m_0\geq M_1$ be such that $V\in\mathcal{O}(m_0,r)$.

We prove that this $m_0$ works. Once we have fixed $m_0$, we have also defined $v$, $w$, $k_0$, $t_0$ and $W$ according to our plan specified above. Since $m_0\geq M_0$, $k_0\geq 2$. Define a sequence $(w_i)$ as follows. For $i\leq n_0$, let $w_i=u_i$. Define
$w_{n_0+1}=w$. For $i>n_0+1$, let $w_i$ be the word obtained using the replacement scheme $(v,w)$ from $v_{m_0+i-n_0-1}$. We claim that $(w_i)$ is the canonical generating sequence for $W$. First, $(w_i)$ is obviously a generating sequence for $W$. To verify the canonicity of the sequence, we repeatedly use Lemmas~\ref{simplybuilt} and \ref{cgsprop}. Note by our construction that $w$ is not built simply from $u$ and $w$ is not built from any word longer than $u$. This implies that both $u$ and $w$ are elements of the canonical sequence for $W$ by Definition~\ref{defcgs} (c) and the lemmas. Similar arguments apply also to $u_i$ for $1\leq i<n_0$. For $i>n_0+1$, $w_i$ is an element of the canonical generating sequence for $W$ because $v_{m_0+i-n_0-1}$ is an element of the canonical generating sequence for $V$.

So far we have verified that $V\approx W$ and $d(U,W)<\epsilon$. To finish the proof we fix a $1\leq j\leq p$ and show that $W\in \mathcal{O}(N_j, r_j)$. For this it suffices to verify that for all $i>N_j$, $\rho_{w_i}-\rho_{w_{N_j}}\leq r_j$. If $i\leq n_0$ this is obvious since $w_i=u_i$ for all $i\leq n_0$. The first nontrivial verification is for $i=n_0+1$. In this case we need to see that $\rho_w-\rho_{u_{N_j}}\leq r_j$. We have
$$\begin{array}{rcl} \rho_w-\rho_{u_{N_j}}&=&\rho_w-\rho_u+\rho_u-\rho_{u_{N_j}} \\ \\
&=& \rho_w-\rho_u+\rho_{U,n_0}-\rho_{U,N_j} \\ \\
&= & \displaystyle\frac{t_0}{(k_0+1)Z(u)}+r_j-\delta_j \\ \\
&\leq&r_j-\delta+\displaystyle\frac{2\mbox{lh}(u)}{\lfloor \mbox{lh}(v)/\mbox{lh}(u)\rfloor-1 } \\ \\
&< & r_j-\delta+\delta/2\ < r_j.
\end{array}
$$
This verifies the desired inequality for $\rho_w$. In addition, it also shows that $r_j-\rho_w+\rho_{u_{N_j}}>\delta/2$. Thus for $i>n_0+1$, we only need to verify that $\rho_{w_i}-\rho_w<\delta/2$.

Note that
$$ \displaystyle\frac{Z(v)}{Z(w)}=\displaystyle\frac{Z(v)}{(\lfloor \mbox{lh}(v)/\mbox{lh}(u)\rfloor-1)\cdot Z(u)}=\displaystyle\frac{c_{m_0}}{Z(u)}\leq c. $$
By Lemma~\ref{rhoincrease} we have that for $i>n_0+1$,
$$ \rho_{w_i}-\rho_w=\displaystyle\frac{1}{Z(w)}\cdot \displaystyle\frac{a}{q} $$
for some $a$ and $q$. Since $v_{m_0+i-n_0-1}$ is obtained by a replacement scheme $(w, v)$ from $w_i$,
we again use Lemma~\ref{rhoincrease} to get
$$ \rho_{v_{m_0+i-n_0-1}}-\rho_v=\displaystyle\frac{1}{Z(v)}\cdot \displaystyle\frac{a}{q} $$
with the same parameters $a$ and $q$ as above. However, $V\in \mathcal{O}(m_0, r)$. Thus
$$ \rho_{v_{m_0+i-n_0-1}}-\rho_v\leq r. $$
We therefore obtain
$$\begin{array}{rcl} \rho_{w_i}-\rho_w&=&\displaystyle\frac{1}{Z(w)}\cdot \displaystyle\frac{a}{q} \\ \\
&=&\displaystyle\frac{Z(v)}{Z(w)}\cdot\displaystyle\frac{1}{Z(v)}\cdot \displaystyle\frac{a}{q} \\ \\
&\leq & cr < c\displaystyle\frac{\delta}{2c}=\displaystyle\frac{\delta}{2}
\end{array}
$$
as desired. This finishes the proof of the main theorem.

\section{Another look at the topology on $\mathcal{R}^*$}

As we have seen in \cite{GaoHill1} and in earlier sections of this paper, the notion of the canonical generating sequence plays an essential role in our study of symbolic rank one systems. Even our definition of the topology $\tau^*$ on $\mathcal{R}^*$ relied on this notion. However, this reliance also makes the definition of $\tau^*$ somewhat technical and perhaps hard to use in potential applications. In this last section we give an alternate definition of the topology $\tau^*$ which does not mention the canonical generating sequences.

Recall that our definition of $\tau^*$ is based on a refinement of the Polish topology $\tau_d$ on $\mathcal{R}$, and the definition of $\tau_d$ already mentions the canonical generating sequences. Thus it is relevant to note the following result from \cite{GaoHill1}.

\begin{lemma}[\cite{GaoHill1} Proposition 2.19]\label{taud} The topology $\tau_d$ on $\mathcal{R}$ can be generated by basic open sets of the form
$$ \{ V\in\mathcal{R}\,|\, \mbox{$V$ is built from $v$}\} $$
where $v\in\mathcal{F}$.
\end{lemma}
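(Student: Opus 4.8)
The plan is to show that $\tau_d$ coincides with the topology $\tau'$ on $\mathcal{R}$ generated by the sets $N_v:=\{W\in\mathcal{R}\mid W\text{ is built from }v\}$, $v\in\mathcal{F}$. Since the $d$-balls form a base for $\tau_d$, this splits into two tasks: (i) each $N_v$ is $\tau_d$-open, which gives $\tau'\subseteq\tau_d$; and (ii) each $d$-ball is a union of sets $N_v$, which gives $\tau_d\subseteq\tau'$. Two facts recorded in \cite{GaoHill1} will be used freely: ``built from'' is transitive, and every element of every generating sequence of $W$ is a word from which $W$ is built. I would also first isolate the following consequence of Lemma~\ref{common}: if $V,W\in\mathcal{R}$ have canonical generating sequences that share an element of some length, then all shorter canonical elements agree as well --- that is, the common elements of two canonical generating sequences form an initial segment of each. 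This follows by applying Lemma~\ref{common} twice (once as stated, using that $W$ is built from the relevant element of $V$'s canonical generating sequence; once with $V$ and $W$ interchanged) and then comparing lengths, which are strictly increasing along a canonical generating sequence.

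For (i), fix $W_0\in N_v$, let $(v_n)$ be the canonical generating sequence of $W_0$, and let $i$ be least with $\mathrm{lh}(v_i)\ge\mathrm{lh}(v)$ (such $i$ exists since $\mathrm{lh}(v_n)\to\infty$). Since $W_0$ is built from $v$ and $v_i$ is a canonical element of $W_0$, Lemma~\ref{cgsprop} forces $v_i=v$ or $v_i$ built from $v$; in either case $v_i$ is built from $v$. Now if $W$ is such that $d(W_0,W)$ is small enough that $W$ and $W_0$ share a common canonical element of length $\ge\mathrm{lh}(v_i)$, then by the initial-segment fact their canonical generating sequences agree at least through index $i$, so $v_i$ is a canonical element of $W$; hence $W$ is built from $v_i$, and therefore from $v$ by transitivity. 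Thus a sufficiently small $d$-ball about $W_0$ lies inside $N_v$, so $N_v$ is $\tau_d$-open.

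For (ii), fix $V\in\mathcal{R}$, $n\in\mathbb{N}$, let $(v_k)$ be the canonical generating sequence of $V$, and let $m$ be least with $\mathrm{lh}(v_m)>n$; put $B=\{W\in\mathcal{R}\mid d(V,W)<2^{-n}\}$. By the initial-segment fact, $W\in B$ precisely when $v_m$ occurs in the canonical generating sequence of $W$. Given such a $W$, with canonical generating sequence $(w_k)$ (so that $w_m=v_m$), take $v:=w_{m+1}$. Then $W$ is built from $v$, so $W\in N_v$. Moreover, if $W'\in N_v$ then $W'$ is built from $w_{m+1}$, so Lemma~\ref{common} (applied with $W$ and $W'$ in the roles of $V$ and $W$, and $m$ in the role of $n$) shows that the first $m+1$ canonical elements of $W'$ coincide with those of $W$; in particular $v_m$ occurs in the canonical generating sequence of $W'$, so $W'\in B$. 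Hence $W\in N_v\subseteq B$, exhibiting $B$ as a union of sets $N_v$ and completing the proof.

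I expect the main obstacle to be the choice of $v$ in step (ii): the naive candidate $N_{v_m}$ is in general strictly larger than $B$ (a word can be built from $v_m$ without having $v_m$ in its canonical generating sequence), and the correct neighborhood is the $N_v$ for the canonical element $v=w_{m+1}$ of $W$ lying just above $v_m$, after which Lemma~\ref{common} does essentially all of the work. The remaining steps are routine manipulations with the canonical generating sequence machinery of \cite{GaoHill1}.
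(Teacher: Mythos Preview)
The paper does not include its own proof of this lemma; it is quoted from \cite{GaoHill1} without argument. Your proposal is therefore being judged on its own merits, and it is correct. The two-step strategy (each $N_v$ is $\tau_d$-open; each $d$-ball is a union of sets $N_v$) is exactly what is needed, and your use of Lemma~\ref{common} to pass from ``$W'$ is built from $w_{m+1}$'' to ``$v_m$ lies in the canonical generating sequence of $W'$'' is the right mechanism. Your closing remark about why the naive choice $N_{v_m}$ fails and $N_{w_{m+1}}$ succeeds is on point.

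One small wording slip: in part (i) you write ``in either case $v_i$ is built from $v$,'' but by Definition~\ref{defcgs}(a) ``built from'' requires cutting parameter $q\ge 2$, so $v$ is not built from itself. This does not affect the argument---if $v_i=v$ then ``$W$ is built from $v_i$'' already says $W\in N_v$---but you should phrase the case split accordingly. Also, your ``initial-segment'' observation is correct, and you might tighten the justification: if $v_k=w_j$ is a shared canonical element, then Lemma~\ref{common} applied in each direction gives $v_i=w_i$ for $i\le k-1$ and for $i\le j-1$; strict monotonicity of lengths then forces $k=j$.
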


This gives an alternate definition of $\tau_d$ without mentioning the canonical generating sequence. What we are doing below is to replace the basic open sets $\mathcal{O}(n,r)$ in the definition of $\tau^*$ by sets whose definitions do not mention the canonical generating sequences.

Suppose $\alpha, \beta\in\mathcal{F}$ and $\alpha$ is built from $\beta$, say
$$ \alpha=\beta 1^{a_1}\beta\cdots \beta 1^{a_{q-1}}\beta, $$
with cutting parameter $q$ and spacer parameter $a_i$, $1\leq i<q$. Define
$$ L(\alpha,\beta)=\displaystyle\frac{1}{\mbox{lh}(\alpha)}\displaystyle\sum_{j=1}^{q-1}a_j. $$
Now suppose $V$ is an infinite rank one word that is built from $v\in\mathcal{F}$. Let $(v_n)$ be any generating sequence of $V$ with $v=v_0$. Define
$$ L(V,v)=\lim_{n\to\infty} L(v_n,v). $$
The value $L(V,v)$ is well defined and does not depend on the choice of the generating sequence $(v_n)$.

To give $L(V,v)$ an intuitive interpretation it is helpful to recall the notion of {\em expectedness} which was discussed in  \cite{GaoHill1}. In the above expression of $\alpha$ in terms of $\beta$, there is a unique way to view $\alpha$ as a collection of disjoint occurrences of $\beta$ separated by 1s. We call the occurrences of $\beta$ in $\alpha$ that are demonstrated in the expression {\em expected}. In general there might be occurrences of $\beta$ in $\alpha$ other than those demonstrated explicitly in this expression, and these occurrences are called {\em unexpected}.
Similarly, if an infinite rank one word $V$ is built from $v\in\mathcal{F}$, then $V$ can be uniquely viewed as a collection of disjoint occurrences of $v$ separated by 1s.  We call elements of this collection {\em expected} and other occurrences of $v$ in $V$ {\em unexpected}. With this terminology, $L(V,v)$ can be viewed as the fraction of entries (necessarily $1$s) in $V$ that are not part of any expected occurrence of $v$ in $V$.

It not hard to see that $L(V,v)$ is related to $\rho_V$ and $\rho_v$ we defined before.  If $v_n$ is a generating sequence for $V$ with $v_0 = v$, then $L(v_n, v)$, the fraction of entries in $v_n$ that are not a part of any expected occurrence of $v$ can be calculated by $$\frac{\rho_{v_n} - \rho_v}{1 + \rho_{v_n}}.$$  Thus, $$L(V,v) = \frac{\rho_{V} - \rho_v}{1 + \rho_{V}} = 1 - \frac{1 + \rho_v}{1 + \rho_V}.$$
The following properties are also easy to check.
\begin{enumerate}
\item  If $V \notin \mathcal{R}^*$, then $L(V, v) = 1$.
\item  If $V \in \mathcal{R}^*$, then $0< L(V, v) <1$; moreover, if $(v_n)$ is any generating sequence for $V$, then $L(V, v_n)$ is a nonincreasing sequence whose limit is 0.
\end{enumerate}

We now define a topology $\tau^{**}$ on $\mathcal{R}^*$, which we will show is the same as $\tau^*$.

\begin{definition} For $v \in \mathcal{F}$ and $s \in \mathbb{Q}\cap (0,1)$, let $$\mathcal{U}(v, s) = \{V \in \mathcal{R}^*\,|\, \mbox{ $V$ is built from $v$ and }  L(V, v) \leq s\}.$$  Let $\tau^{**}$ be the topology on $\mathcal{R}^*$ generated by sets of the form $\mathcal{U}(v, s)$.
\end{definition}

Note that
$$\{V\in\mathcal{R}^*\,|\, \mbox{$V$ is built from $v$}\} = \bigcup_{s \in \mathbb{Q}\cap(0,1)}   \mathcal{U}(v,s). $$
Thus $\tau^{**}$ is a refinement of $\tau_d$ in view of Lemma~\ref{taud}.

\begin{proposition}
$\tau^* = \tau^{**}$.
\end{proposition}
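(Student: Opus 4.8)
The plan is to show the two topologies have the same convergent sequences (both are separable metrizable, so this suffices), or equivalently that each sub-basic open set of one topology is open in the other. Since both $\tau^*$ and $\tau^{**}$ refine $\tau_d$, it is enough to compare the ``extra'' sub-basic sets: the $\mathcal{O}(N,r)\cap\mathcal{R}^*$ on the $\tau^*$ side and the $\mathcal{U}(v,s)$ on the $\tau^{**}$ side. I would translate everything through the identity $L(V,v_n) = \frac{\rho_V - \rho_{v_n}}{1+\rho_V}$ derived in the text, so that the hypothesis ``$V$ is built from $v$ and $L(V,v)\le s$'' becomes a statement comparing $\rho_V$ with $\rho_v$, which is exactly the kind of statement the $\mathcal{O}(N,r)$ sets control.

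\textbf{Step 1: $\tau^{**}\subseteq\tau^*$.} Fix $v\in\mathcal{F}$ and $s\in\mathbb{Q}\cap(0,1)$, and fix $V\in\mathcal{U}(v,s)$; I must find a $\tau^*$-neighborhood of $V$ inside $\mathcal{U}(v,s)$. Since $V$ is built from $v$, by Lemma~\ref{cgsprop} (applied along the canonical generating sequence of $V$) there is $N$ such that $v_N$ (the $N$-th canonical word of $V$) is built from $v$; in fact I can arrange $v$ itself to be $v_{N}$ if $v$ is a canonical word, and otherwise use the least canonical word built from $v$. Using $L(V,v)\le s$ and the displayed formula, I get a strict-or-nonstrict inequality between $\rho_V$ and $\rho_v = \rho_{V,N}$, which after rearranging reads $\rho_V - \rho_{V,N} \le$ (an explicit quantity $\le s$-ish in terms of $\rho_{V,N}$ and $s$). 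Now the $\tau_d$-ball $\{U : d(U,V) < 2^{-\mathrm{lh}(v_N)}\}$ forces, via Lemma~\ref{common}, that $\rho_{U,i} = \rho_{V,i}$ for $i\le N$, in particular $U$ is built from $v$; and intersecting with a suitable $\mathcal{O}(N,r)$ with $r$ small rational bounds $\rho_U - \rho_{U,N}$, hence $L(U,v)$, below $s$. The only subtlety is matching non-strict vs.\ strict inequalities: I would pick $s' < s$ rational, note $\{L(V,v)\le s\}\supseteq\{L(V,v)\le s'\}$, and handle the boundary case $L(V,v)=s$ separately using that $\rho_{V,n}$ is strictly increasing (nondegeneracy), so $L(V,v_N)<L(V,v)\le s$ already puts $V$ in the interior relative to a larger canonical word $v_N$ — i.e.\ replace $v$ by a longer canonical word with strictly smaller $L$ value.

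\textbf{Step 2: $\tau^*\subseteq\tau^{**}$.} Conversely, fix $N\in\mathbb{N}$, a positive rational $r$, and $V\in\mathcal{O}(N,r)\cap\mathcal{R}^*$; I must produce a $\tau^{**}$-neighborhood of $V$ inside $\mathcal{O}(N,r)\cap\mathcal{R}^*$. Take the first $N$ canonical words $v_0,\dots,v_N$ of $V$. The set of all $U$ built from $v_N$ is, by Lemma~\ref{cgsprop}/Lemma~\ref{common}, contained in $\{U : \rho_{U,i}=\rho_{V,i} \text{ for } i\le N\}$ once we also know $v_N$ is a \emph{canonical} word of $U$ — which it is on a $\tau_d$-ball, and $\tau^{**}$ refines $\tau_d$, so I can include a $\tau_d$-ball as well (or observe directly that ``built from $v_N$'' together with ``$L(U,v_N)$ small'' forces the earlier canonical words to agree, via Lemma~\ref{common}). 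Having secured $\rho_{U,i}=\rho_{V,i}$ for $i\le N$, the condition defining $\mathcal{O}(N,r)$ reduces, after subtracting $\rho_{U,N}=\rho_{V,N}$, to a bound $\rho_U - \rho_{V,N} \le$ something; since $V\in\mathcal{O}(N,r)$ we have $\rho_V - \rho_{V,N} \le r$ and — crucially — because $\rho_{V,n}\uparrow\rho_V$ is not eventually constant we actually get the \emph{strict} inequality $\rho_V - \rho_{V,N} < r$ (same observation as in the proof of Lemma~\ref{tau*}). Translate $\rho_V - \rho_{V,N} < r$ into $L(V,v_N) < s_0$ for an explicit rational $s_0$, and choose $s\in(\,L(V,v_N),\,s_0\,)\cap\mathbb{Q}$; then $\mathcal{U}(v_N,s)$ is a $\tau^{**}$-neighborhood of $V$, and for any $U\in\mathcal{U}(v_N,s)$ we get $\rho_U - \rho_{V,N} \le$ (the quantity corresponding to $s$) $< r$, whence $U\in\mathcal{O}(N,r)$.

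\textbf{Main obstacle.} The genuinely delicate point in both directions is the interplay between the \emph{non-strict} inequalities appearing in the definitions of $\mathcal{O}(N,r)$ and $\mathcal{U}(v,s)$ and the need for \emph{strict} inequalities with room to spare when passing to a neighborhood. The resolution in each case is the same structural fact, used already in Lemma~\ref{tau*}: for a nondegenerate $V$ the sequence $(\rho_{V,n})$ is strictly increasing and converges to $\rho_V$, so $\rho_V - \rho_{V,N} < r$ whenever $V\in\mathcal{O}(N,r)$, and symmetrically $L(V,v_n)$ is strictly decreasing to $0$. Feeding this slack into the arithmetic of Lemma~\ref{rhoincrease} and the ``canonical words agree'' consequence of Lemma~\ref{common} closes both inclusions; the remaining computations (solving the displayed identity for $\rho$ in terms of $L$, choosing the intermediate rational thresholds) are routine.
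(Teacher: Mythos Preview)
Your approach is correct and matches the paper's: show each sub-basic set of one topology is open in the other by translating between the $\rho$-inequalities defining $\mathcal{O}(N,r)$ and the $L$-inequalities defining $\mathcal{U}(v,s)$ via the identity $L(V,v)=1-\frac{1+\rho_v}{1+\rho_V}$, using Lemma~\ref{common} to align canonical words. Two small corrections worth noting: in Step~2 the paper takes $v=v_{N+1}$ (not $v_N$), so that ``$W$ built from $v$'' alone already forces $w_N=v_N$ by Lemma~\ref{common} without an auxiliary $\tau_d$-ball; and your asserted strict inequality $\rho_V-\rho_{V,N}<r$ can fail (only $\le$ follows from $V\in\mathcal{O}(N,r)$), but no strictness is needed since the threshold $s_0=1-\frac{1+\rho_{v_{N+1}}}{1+(r+\rho_{v_N})}$ is itself rational and positive (positivity uses $\rho_{V,N+1}<\rho_V\le\rho_{V,N}+r$, which \emph{does} follow from nondegeneracy), so one simply sets $s=s_0$.
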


\begin{proof}

We first show $\tau^{**}\subseteq \tau^*$. For this let $V \in \mathcal{U}(v, s)$.  We want to find a $\tau^*$-open set $\mathcal{O}$ with $V\in\mathcal{O}\subseteq \mathcal{U}(v,s)$.

Let $(v_n)$ be the canonical generating sequence for $V$ and let $N$ be as small as possible so that $\mbox{lh}(v_N)>\mbox{lh}(v)$.  By Lemma~\ref{cgsprop}, $v_N$ is built from $v$.  Now let $r = \frac{1 + \rho_v}{1 - s} - (1 + \rho_{v_N})$ and note that $r$ is positive and rational.  Let 
$$\mathcal{O} = \{W \in \mathcal{R}^*\,|\, d(V, W) < 2^{-\mbox{\scriptsize lh}(v)} \textnormal{ and } W \in \mathcal{O}(N, r)\}.$$

It is clear that $\mathcal{O}$ is $\tau^*$-open.  Note that each $W \in \mathcal{R}^*$ belongs to $\mathcal{O}$ iff $v_N$ is in the canonical generating sequence of $W$ and $$\rho_W \leq \frac{1 + \rho_v}{1-s} -1.$$  Simple algebra verifies (since $s<1$) that this last condition is equivalent to $1 - \frac{1 + \rho_v}{1+ \rho_W} \leq s$.  It is then straightforward to check that $V \in \mathcal{O} \subseteq \mathcal{U}(v, s)$.




For the other direction it suffices to show that $\mathcal{O}(N,r)$ is $\tau^{**}$-open for any $N \in\mathbb{N}$ and positive $r \in \mathbb{Q}$. Fix $N, r$ and let $V\in\mathcal{O}(N,r)$. We want to find $v\in\mathcal{F}$ and $s \in \mathbb{Q} \cap (0,1)$ such that $V\in \mathcal{U}(v,s)\subseteq \mathcal{O}(N,r)$. 

Let $(v_n)$ be the canonical generating sequence for $V$. Let $v=v_{N+1}$ and
$$ s = 1 - \frac{1+ \rho_v}{1 + (r + \rho_{v_N})}.$$  Since $V \in \mathcal{O}(N,r)$, we know that $\rho_V - \rho_{v_N} \leq r$ and thus, $\rho_V \leq r + \rho_{v_N}$.  This implies that $$1 - \frac{1 + \rho_v}{1 + \rho_V} \leq 1 - \frac{1 + \rho_v}{1 + (r + \rho_{v_N})} = s$$ and thus $V \in \mathcal{U}(v, s)$.  It remains to show that $\mathcal{U}(v, s) \subseteq \mathcal{O}(N,r)$.  

Let $W \in \mathcal{U}(v, s)$ and let $(w_m)$ be the canonical generating sequence for $W$. Since $W$ is built from $v=v_{N+1}$, Lemma~\ref{common} implies that $w_N=v_N$.  Also, we know $$1 - \frac{1 + \rho_v}{1 + \rho_W} \leq  s = 1 - \frac{1+ \rho_v}{1 + (r + \rho_{v_N})}.$$  This implies that $\rho_W \leq r - \rho_{v_N}$, which implies that $\rho_W - \rho_{v_N} \leq r$.  Since $v_N = w_N$, $W \in \mathcal{O}(N,r)$.

\end{proof}

It is possible to give a direct proof of the main theorem using the alternate definition of $\tau^*$, without using the notion of canonical generating sequences. However, the purpose of introducing the alternate definition is not to avoid the notion of canonical generating sequences in the study of symbolic rank one systems, but to make the main theorem easier to apply in the study of generic dynamical properties of rank one transformations.

\end{document}